\numberwithin{equation}{section}
\def\XXint#1#2#3{{\setbox0=\hbox{$#1{#2#3}{\int}$}
\vcenter{\hbox{$#2#3$}}\kern-.5\wd0}}
\def\na{\nabla}
\newcommand{\al}{\alpha}
\newcommand{\be}{\beta}
\newcommand{\cU}{\mathcal{U}}
\newcommand{\dbar}{\overline{\partial}}
\newcommand{\ddbar}{\sqrt{-1}\partial\dbar}
\newtheorem{theorem}{Theorem}[section]
\newtheorem{proposition}{Proposition}[section]
\newtheorem{lemma}{Lemma}[section]
\newtheorem{example}{Example}[section]
\newtheorem{definition}{Definition}[section]
\newtheorem{corollary}{Corollary}[section]
\newtheorem{remark}{Remark}[section]
\begin{document}

~

\title{Dirichlet Problem for Complex Monge-Amp\`ere equation near an isolated Klt singularity}

\author{Xin Fu}

%\address{$*$ Department of Mathematics, Indian Institute of Science, Bangalore, India}

%\email{vvdatar@iisc.ac.in}

\address{Department of Mathematics,  University of California, Irvine, CA 92617}

\email{fux6@uci.edu}

%\address{$\dagger$ Department of Mathematics, Rutgers University, Piscataway, NJ 08854}

%\email{jiansong@math.rutgers.edu}

\begin{abstract}
We solve the Dirichlet Problem for complex Monge-Amp\`ere equation near an isolate Klt singularity, which generalizes the result of Eyssidieux-Guedj-Zeriahi \cite{EGZ}, where the Monge-Amp\`ere equation is solved on singular varieties without boundary. As a corollary, we construct solutions to Monge-Amp\`ere equation with isolated singularity on strongly pseudoconvex domain $\Omega$ contained in $\mathbb C^n$.  
\end{abstract}
\bigskip

\maketitle

\bigskip

%%%%%%%%%%%%%%%%%%%%%%%%%%%%%%%%%%%%%%%%

\section{Introduction}\label{intro}
Complex Monge-Amp\`ere equations are a fundamental tool to study K\"ahler geometry and, in particular, canonical K\"ahler metrics of Einstein type on smooth and singular K\"ahler varieties. Yau's solution to the Calabi conjecture establishes the existence of Ricci flat K\"ahler metrics  on K\"ahler manifolds of vanishing first Chern class by a priori estimates for complex Monge-Amp\`ere equations \cite{Y1}. 
\begin{theorem}[Yau \cite{Y1}]\label{MA}Let $(X, \omega)$ be a K\"ahler manifold of complex dimension $n$ equipped with a K\"ahler metric $\omega$. 
We consider the following complex Monge-Amp\`ere equation 
\begin{equation} \label{cma} 
(\omega+ \ddbar \varphi)^n = e^{ - f} \omega^n,
\end{equation}
where  $f\in C^\infty(X)$ satisfies the normalization condition $$\int_X e^{-f}\omega^n = \int_X \omega^n = [\omega]^n. $$
\end{theorem}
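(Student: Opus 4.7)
The plan is to establish existence of a smooth solution to \eqref{cma} via the continuity method, following Yau's original strategy. Introduce the one-parameter family
\begin{equation*}
(\omega + \ddbar \varphi_t)^n = e^{-tf + c_t}\,\omega^n, \qquad t\in[0,1],
\end{equation*}
where the constant $c_t$ is determined by $\int_X e^{-tf+c_t}\omega^n = [\omega]^n$ and the solutions are normalized by $\sup_X \varphi_t = 0$. At $t=0$, $\varphi_0\equiv 0$ is a solution; the hypothesis $\int_X e^{-f}\omega^n=[\omega]^n$ forces $c_1=0$, so solving at $t=1$ recovers \eqref{cma}. Setting $S=\{t\in[0,1]:\text{a smooth solution }\varphi_t\text{ exists}\}$, it suffices to show $S$ is open, closed, and nonempty in $[0,1]$.

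\textbf{Openness.} For $t_0\in S$, the linearization of the operator $\varphi\mapsto\log\frac{(\omega+\ddbar\varphi)^n}{\omega^n}$ at $\varphi_{t_0}$ is $\tfrac12\Delta_{\omega+\ddbar\varphi_{t_0}}$, which is an elliptic isomorphism between the subspaces of $C^{k+2,\alpha}(X)$ and $C^{k,\alpha}(X)$ consisting of functions of zero $(\omega+\ddbar\varphi_{t_0})^n$-mean. Applying the implicit function theorem in these H\"older spaces produces solutions $\varphi_t$ for all $t$ in a neighborhood of $t_0$, so $S$ is open.

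\textbf{Closedness and the main obstacle.} The substance of the proof is closedness, which demands uniform a priori $C^\infty$ bounds on $\varphi_t$ depending only on $(\omega,f)$. The classical chain of estimates is: (i) the $C^0$ bound $\|\varphi_t\|_{L^\infty}\le C$, obtained by Moser iteration using the Sobolev inequality on $(X,\omega)$ and integration-by-parts identities relating $\omega^n$ and $(\omega+\ddbar\varphi_t)^n$ weighted by powers of $-\varphi_t$; (ii) the Laplacian bound $0<n+\Delta_\omega\varphi_t\le C$, by applying $\Delta_{\omega+\ddbar\varphi_t}$ to $\log(n+\Delta_\omega\varphi_t)-A\varphi_t$ and invoking the maximum principle with $A$ chosen larger than a lower bound on the bisectional curvature of $\omega$ (the Aubin--Yau inequality), which couples back to step (i); (iii) a $C^{2,\alpha}$ estimate via Evans--Krylov theory for concave fully nonlinear equations (or equivalently Calabi's third-order estimate); (iv) higher regularity by Schauder bootstrap, once $\omega+\ddbar\varphi_t$ is uniformly equivalent to $\omega$ with $C^\alpha$ coefficients. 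The principal obstacle, and the genuinely novel contribution in Yau's argument, is step (i): the $L^p$ iteration must be adapted to the Monge--Amp\`ere structure and makes essential use of the normalization condition on $f$. With all bounds in hand, Arzel\`a--Ascoli extracts a smooth limit solution at any $t_\infty=\lim t_k\in\overline{S}$, closing $S$ and completing the proof.
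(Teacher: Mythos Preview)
The paper does not supply its own proof of this statement: Theorem~\ref{MA} is quoted as background and attributed to Yau~\cite{Y1}, with no argument given (indeed, as written the statement merely sets up equation~\eqref{cma} and its normalization without even stating the existence conclusion explicitly). There is therefore nothing in the paper to compare your proposal against.

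That said, your outline is a faithful sketch of Yau's original continuity-method proof and is essentially correct. One small quibble: in your discussion of step~(i) you say the Moser iteration ``makes essential use of the normalization condition on $f$,'' but the normalization $\int_X e^{-f}\omega^n=[\omega]^n$ only serves to fix $c_1=0$; the $C^0$ estimate itself depends on $\|f\|_{C^0}$ (or, in Ko{\l}odziej's refinement quoted immediately after in the paper, on an $L^p$ bound for $e^{-f}$), not on the integral normalization per se.
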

In the deep work of Kolodziej \cite{Kol1}, Yau's $C^0$-estimate for solutions of equation (\ref{cma}) is tremendously improved by applying the pluripotential theory and it has important applications for singular and degenerate geometric complex Monge-Amp\`ere equations.  
\begin{theorem}[Kolodziej\cite{Kol1}] Suppose  the right hand side of equation (\ref{cma}) satisfies the following $L^p$ bound
$$\int_{X} e^{-pf} \omega^n \leq K, ~ \textnormal{for ~some}~ p>1, $$
then there exists  $C=C(X, \omega, p, K)>0$ such that any solution $\varphi$ of equation (\ref{cma}) satisfies the the following $L^\infty$-estimate
$$ \| \varphi - \sup_X \varphi \|_{L^\infty(X)} \leq C. $$
\end{theorem}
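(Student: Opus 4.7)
The plan is to follow the pluripotential-theoretic strategy originating with Kolodziej. First I would normalize by setting $\psi := \varphi - \sup_X \varphi$, so that $\psi \in \mathrm{PSH}(X,\omega)$, $\sup_X \psi = 0$, and $(\omega + \ddbar \psi)^n = e^{-f}\omega^n$. Since $\omega$-psh functions normalized by $\sup = 0$ form a compact family in $L^1(X,\omega^n)$, the problem reduces to producing a uniform lower bound $\inf_X \psi \geq -C$. The main tool is the Monge--Amp\`ere capacity
$$\operatorname{Cap}_\omega(E) := \sup\left\{\int_E (\omega + \ddbar u)^n : u \in \mathrm{PSH}(X,\omega),\; -1 \leq u \leq 0\right\}.$$
Applying the comparison principle to the test function $tu$ against $\psi + s + t$ yields, for all $s, t > 0$, the Chebyshev-type inequality
$$t^n \operatorname{Cap}_\omega(\{\psi < -s-t\}) \leq \int_{\{\psi < -s\}} (\omega + \ddbar \psi)^n = \int_{\{\psi < -s\}} e^{-f}\omega^n.$$

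Next, H\"older's inequality combined with the $L^p$ hypothesis gives
$$\int_{\{\psi < -s\}} e^{-f}\omega^n \leq K^{1/p} \operatorname{Vol}(\{\psi < -s\})^{1/q},$$
with $q$ the conjugate exponent of $p$. The central analytic ingredient is then a volume-capacity inequality for $\omega$-psh functions: for every Borel set $E \subset X$,
$$\operatorname{Vol}(E) \leq C \exp\bigl(-c\, \operatorname{Cap}_\omega(E)^{-1/n}\bigr),$$
proved via Chern--Levine--Nirenberg-type estimates together with the uniform $L^1$ bound on normalized $\omega$-psh functions. Combined with the $L^1$-bound on $\psi$ itself, this supplies an initial ``seed'' value $s_0$ at which $\operatorname{Cap}_\omega(\{\psi < -s_0\})$ can be made arbitrarily small.

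Setting $g(s) := \operatorname{Cap}_\omega(\{\psi < -s\})^{1/n}$, the three ingredients above combine to a functional inequality of the form $t \, g(s + t) \leq F(g(s))$, where $F(x)/x \to 0$ super-linearly as $x \to 0^+$ --- this is precisely where the extra integrability $p > 1$ is consumed. A De~Giorgi--type iteration, with $s_{k+1} = s_k + t_k$ for a carefully chosen summable sequence $t_k$, then shows that $g(s_\infty) = 0$ for some finite $s_\infty$ depending only on $X$, $\omega$, $p$ and $K$. Since any open set on which the capacity vanishes is pluripolar, the upper-semicontinuity of $\psi$ promotes this to the pointwise bound $\psi \geq -s_\infty$ on all of $X$, giving the desired estimate $C = s_\infty$. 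The main obstacle is the volume-capacity inequality itself: obtaining the correct exponential decay in terms of capacity requires delicate pluripotential arguments, and matching it against the $L^q$ gain from H\"older so that $F$ has the required super-linear decay at $0$ is exactly the step at which the hypothesis $p > 1$ enters in an essential way.
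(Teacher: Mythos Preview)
The paper does not actually prove this theorem; it is quoted from Ko{\l}odziej \cite{Kol1} as background in the introduction, so there is no ``paper's own proof'' to compare against. Your outline is correct and is precisely Ko{\l}odziej's original pluripotential strategy: comparison principle $\Rightarrow$ the capacity--mass inequality $t^n\operatorname{Cap}_\omega(\{\psi<-s-t\})\le \int_{\{\psi<-s\}}(\omega+\ddbar\psi)^n$, H\"older plus the exponential volume--capacity estimate to close up a functional inequality for $g(s)=\operatorname{Cap}_\omega(\{\psi<-s\})^{1/n}$, and a De~Giorgi-type iteration to force $g$ to vanish past a finite threshold.

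It is worth remarking that the paper does carry out an adaptation of this very scheme in its own setting (Proposition~\ref{C0} and the surrounding lemmas), with two mild differences from your sketch. First, in place of the sharp exponential volume--capacity bound you invoke, the paper uses only a polynomial comparison $\lambda(K)\le C\,\operatorname{Cap}^2$ (Lemma~\ref{M-C}, quoting \cite{FGS}); this weaker input is still enough for the iteration Lemma~\ref{lemma 3.4} because it already produces an inequality $rF(l+r)\le A F(l)^{1+\alpha}$ with $\alpha>0$. Second, the paper obtains the initial decay (your ``seed'' $s_0$) not via an $L^1$ compactness argument but by a direct capacity computation (Proposition~\ref{decay}) exploiting that the reference form has a global potential. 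Your route via the exponential volume--capacity inequality is the sharper and more standard one on compact K\"ahler manifolds; the paper's variant is tailored to its domain-with-boundary situation where the needed ingredients have to be rebuilt.
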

By using Kolodziej's fundamental estimate,  K\"ahler-Einstein metric on  canonical polarized variety with Klt singularity is constructed in \cite{EGZ}. More precisely, the following holds: 

\begin{theorem}[Eyssidieux-Guedj-Zariahi \cite{EGZ}, Zhang \cite{Z}]\label{EGZ} Suppose $X$ is canonical polarized or Calabi-Yau variety with Klt singularity, then there exists a solution $\varphi \in \textnormal{PSH}(X,\omega)\cap C^{\infty}(X_{reg})\cap L^{\infty}_{loc}(X_{reg})$ on the regular locus of $X$ to the equation (\ref{MA}).  
\end{theorem}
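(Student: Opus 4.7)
The plan is to lift the equation to a log-resolution of singularities $\pi: \tX \to X$, solve a regularized family of non-degenerate Monge-Amp\`ere equations on $\tX$ via Yau's Theorem~\ref{MA}, and pass to the limit using Kolodziej's estimate. The Klt condition is precisely what makes this work: under $\pi$, the adapted measure (coming from a local generator of $mK_X$) pulls back to $|s|^{2A}$ times a smooth volume form on $\tX$, where $s$ is a holomorphic section defining the exceptional divisor $E$ and the discrepancy exponents $a_i$ satisfy $a_i > -1$. Hence the resulting density lies in $L^p(\tX)$ for some $p>1$ with respect to any smooth reference form on $\tX$.

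Setting $\tomega = \pi^*\omega$ (smooth, semi-positive on $\tX$, degenerating along $E$) and fixing an auxiliary K\"ahler metric $\omega_{\tX}$ on $\tX$, the K\"ahler-Einstein equation on $X$ transforms into a degenerate equation
\[
(\tomega + \ddbar \tvarphi)^n \;=\; e^{\lambda \tvarphi}\, F\, \omega_{\tX}^n \qquad \text{on } \tX,
\]
with $F \in L^p(\tX,\omega_{\tX}^n)$ and $\lambda=1$ in the canonically polarized case or $\lambda=0$ in the Calabi-Yau case. For $\epsilon\in(0,1]$ set $\tomega_\epsilon = \tomega + \epsilon\,\omega_{\tX}$ (a genuine K\"ahler metric) and approximate $F$ by smooth positive densities $F_\epsilon$ with $\|F_\epsilon\|_{L^p}$ uniformly bounded and matching total mass. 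Yau's Theorem~\ref{MA}, combined with a standard continuity argument to absorb the $e^{\lambda\tilde\varphi}$ factor when $\lambda=1$, produces smooth solutions $\tilde\varphi_\epsilon$ of the non-degenerate equation $(\tomega_\epsilon + \ddbar\tilde\varphi_\epsilon)^n = e^{\lambda\tilde\varphi_\epsilon}F_\epsilon\,\omega_{\tX}^n$. Kolodziej's estimate, applied uniformly with the $L^p$ bound, then gives $\|\tilde\varphi_\epsilon\|_{L^\infty(\tX)} \leq C$ independent of $\epsilon$ (after normalizing $\sup\tilde\varphi_\epsilon = 0$ if $\lambda=0$).

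Next, extract an $L^1$-convergent subsequence $\tilde\varphi_\epsilon \to \tilde\varphi \in \textnormal{PSH}(\tX,\tomega)\cap L^\infty(\tX)$; Bedford-Taylor continuity of the Monge-Amp\`ere operator along monotone sequences of bounded psh functions shows that $\tilde\varphi$ solves the limiting equation in the pluripotential sense. Because $\tomega$ vanishes on each connected component of $E$, a bounded $\tomega$-psh function is forced to be constant on each such component; hence $\tilde\varphi = \varphi\circ\pi$ for some $\varphi \in \textnormal{PSH}(X,\omega)\cap L^\infty(X)$. On the regular locus $X_{reg}$, $\pi$ is a biholomorphism and the equation becomes a non-degenerate Monge-Amp\`ere equation with smooth, strictly positive right-hand side; Evans-Krylov together with Schauder bootstrap then yield $\varphi \in C^\infty(X_{reg})$.

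The principal obstacle is obtaining Kolodziej's $L^\infty$ estimate uniformly along the degenerating reference family $\tomega_\epsilon$: as stated, the constant in Kolodziej's theorem depends on the background K\"ahler form, and one must show that it does not blow up as $\tomega_\epsilon$ degenerates along $E$. This requires a capacity/volume comparison that exploits the bigness and semi-positivity of $[\tomega]$ inherited from the ampleness (or numerical triviality) of $[\omega]$ on $X$, and constitutes the technical heart of the EGZ method.
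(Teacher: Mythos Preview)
Your proposal is a correct outline of the original EGZ method and would establish the full statement, including the $L^\infty_{loc}$ bound. The paper, however, does not reproduce that argument: Theorem~\ref{EGZ} is stated as a cited result, and the paper's own contribution toward it is Proposition~\ref{b}, an alternative proof that deliberately \emph{avoids} pluripotential theory. The paper also perturbs by $s\theta$ on a log resolution and passes to the limit, but in place of the uniform Kolodziej estimate it invokes the $C^0$ estimate of \cite{DFS} (Lemma~\ref{prop:c0}), which only yields
\[
\varphi_s \;\geq\; -2n\log\big(-\log|\sigma_D|_{h_D}^2\big) - C.
\]
Combined with the barrier second-order estimate (Lemma~\ref{2ndem}) and the remaining higher-order lemmas, this suffices for $C^\infty$ regularity on $X_{reg}$, but---as the paper explicitly notes in the remark following Proposition~\ref{b}---it does \emph{not} give $\varphi\in L^\infty_{loc}(X_{reg})$. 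So your route recovers the full EGZ conclusion at the cost of the uniform-in-$\epsilon$ Kolodziej estimate (which you rightly flag as the technical heart and leave to the cited references), whereas the paper's route is more elementary but delivers a strictly weaker statement.
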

\begin{remark} Choosing suitable right hand side of equation (\ref{MA}), the metric induced by $\omega+\ddbar\varphi$ is a
K\"ahler-Einstein metric with negative scalar curvature or Ricci flat metric on $X$.
\end{remark}
On the other hand, Dirichlet problem for complex Monge-Amp\`ere equation is studied in the fundamental work of Bedford-Taylor \cite{BT1,BT} and also Caffarelli- Kohn-Nirenberg-Spruck \cite{CKNS}. (see also \cite{Guan,CXX}). Therefore it is a natural question to study the Dirichlet problem for complex Monge-Amp\`ere equations near an isolated Klt singularity from the PDE point of view.

Another motivation to study the Dirichlet problem for complex Monge-Amp\`ere equations near an isolated Klt singularity is to understand the tangent cone of K\"ahler-Einstein metrics proposed by Donaldson-Sun in \cite{DS2}. For example, in the discussion section of \cite[Page 116]{H-S}, it is proposed to study the local Ricci flat metric near an isolated Klt singularity and show that the metric tangent cone only depends on the algebraic structure of the singularity. In this short note, we first show the existence of local Ricci flat metrics by solving Dirichlet problem for complex Monge-Amp\`ere equation. 

Now we proceed to set up the equation. Let $(X,p)$ be an affine variety embedded in $(\mathbb C^n,0)$ with $p$  an isolated singular point of $X$  and also let $\rho$ be a  nonnegative smooth plurisubharmonic (PSH) function on $\mathbb C^n$ with $\rho(0)=0$ and $\ddbar \rho>0$.  We let  $\mathcal U \subset\subset X$ be the open domain defined by
\begin{equation}\label{domain1}
\mathcal U=\{\rho< a\} \cap X,
\end{equation}
for some sufficiently small constant $a>0$. 
In this note, we  fix $$\rho = \log(1+\sum_{j=1}^n |z_j|^2).$$ By choosing a  generic sufficiently small $a>0$, we can assume that $\partial\mathcal U$ is smooth and strongly pseudoconvex. We will fix such a domain $\mathcal U$ for $(X, p)$ for the rest of the paper. If the canonical divisor $K_{\mathcal U}$ is Cartier,  we will fix a local volume measure $\Omega_X$ on $\mathcal U$ by 
$$\Omega_X=  (\sqrt{-1})^n\nu \wedge \overline \nu, $$ 
where $\nu$ is a local generator of $K_{\mathcal U}$. Similarly, we can define $\Omega_X$ when $K_{\mathcal U}$ is only $\mathbb{Q}$-Cartier. 

We will consider the following Dirichlet problem for a complex Monge-Amp\`ere equation related to the K\"ahler-Einstein metric on $\mathcal U$.
\begin{equation} \label{ocmp1}
\left\{
\begin{array}{ll}
(\ddbar \varphi)^n =  \Omega_X,  \\
& \\ 
\varphi|_{\partial \cU} =\psi,\\
\end{array}\right. 
\end{equation}
where $\psi\in C^\infty(\partial \mathcal U)$. Now we can state the main result of the paper. 
\begin{theorem}\label{main1} 
Let $(\mathcal U,p)$ be a germ of an isolated Klt singularity defined as in definition \ref{domain1}. There exists a solution $\varphi \in \textnormal{PSH}(\mathcal U)\cap C^{\infty}(\overline{\cU}\setminus \{p\})\cap L^{\infty}_{loc}(\mathcal U\setminus \{p\})$ of equation \eqref{ocmp1}. Let $g_{KE}$ be the K\"ahler metric defined by $\ddbar \varphi$ in $\mathcal U$. Then it satisfies the K\"ahler-Einstein equation
\begin{equation}\label{keq}
Ric(g_{KE})= 0
\end{equation}
 in $\mathcal U$.

\end{theorem}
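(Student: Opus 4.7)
My approach is to follow the Eyssidieux-Guedj-Zeriahi strategy, adapted to the Dirichlet boundary problem, via approximation on a log resolution combined with pluripotential estimates. First, take a log resolution $\pi\colon\tilde{\cU}\to\cU$ with simple normal crossing exceptional divisor $E=\bigcup_i E_i$ over $p$; since $\pi$ is an isomorphism away from $E$ and $p\notin\partial\cU$, the boundary $\partial\tilde{\cU}$ remains smooth and strongly pseudoconvex. The Klt hypothesis gives $K_{\tilde\cU}=\pi^\ast K_\cU+\sum a_iE_i$ with $a_i>-1$, so writing $s_i$ for defining sections of $\OO_{\tilde\cU}(E_i)$ with Hermitian metrics $h_i$, one has $\pi^\ast\Omega_X=\prod_i|s_i|^{2a_i}_{h_i}\,\tilde\Omega$ for some smooth positive volume form $\tilde\Omega$, and the discrepancy bounds yield $\pi^\ast\Omega_X\in L^p(\tilde\cU,\tilde\Omega)$ for some $p>1$, placing us in the pluripotential regime of Kolodziej.

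Next, I regularize by $\Omega_\epsilon=\prod_i(|s_i|^2_{h_i}+\epsilon)^{a_i}\,\tilde\Omega$, a smooth positive family converging monotonically to $\pi^\ast\Omega_X$ with uniform $L^p$ norm, and for each $\epsilon>0$ solve the smooth Dirichlet problem
\begin{equation*}
(\ddbar\tilde\varphi_\epsilon)^n=\Omega_\epsilon,\qquad \tilde\varphi_\epsilon|_{\partial\tilde\cU}=\psi\circ\pi
\end{equation*}
on $\tilde\cU$ by the continuity method. A smooth plurisubharmonic subsolution is built from a smooth extension of $\psi$ to $\overline{\cU}$ plus a large multiple of $\rho-a$, pulled back via $\pi$ and perturbed by a small Kähler form supported near $E$ to restore strict positivity across the exceptional locus; uniqueness of $\tilde\varphi_\epsilon$ then comes from the comparison principle.

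The crucial step is a uniform $L^\infty$ bound $\|\tilde\varphi_\epsilon\|_{L^\infty(\tilde\cU)}\le C$ independent of $\epsilon$, which I would obtain by adapting Kolodziej's capacity argument to the boundary setting, using the uniform $L^p$ bound on $\Omega_\epsilon/\tilde\Omega$ together with the boundary maximum principle to control $\tilde\varphi_\epsilon$ against an upper barrier. With this in hand, on any compact $K\Subset\overline{\tilde\cU}\setminus E$ the right-hand side is smooth and uniformly two-sided bounded, so Yau's $C^2$ estimate, Evans-Krylov, and the CKNS boundary estimates give uniform higher-order bounds on $K$. Extracting a subsequential limit and invoking Bedford-Taylor continuity of the complex Monge-Ampère operator on bounded psh functions produces $\tilde\varphi\in\textnormal{PSH}(\tilde\cU)\cap C^\infty(\overline{\tilde\cU}\setminus E)$ solving $(\ddbar\tilde\varphi)^n=\pi^\ast\Omega_X$ with the prescribed boundary data. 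Pushing down via $\pi$ gives $\varphi$ on $\cU$ with the claimed regularity, and Ricci-flatness $\ric(g_{\KE})=-\ddbar\log\Omega_X=0$ on $\cU\setminus\{p\}$ follows from the holomorphicity of the local $K_\cU$-generator.

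The main obstacle is the uniform $L^\infty$ estimate on $\tilde\cU$. Because $\tilde\cU$ contains the positive-dimensional compact exceptional divisor $E$ in its interior, it is not strongly pseudoconvex in the Stein sense, so Kolodziej's machinery, written either for compact closed Kähler manifolds or for strongly pseudoconvex domains in $\mathbb{C}^n$, does not apply directly. Carrying out the capacity estimates uniformly across $E$ while simultaneously tracking the smooth boundary data $\psi$ is the technical crux of the argument.
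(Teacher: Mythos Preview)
Your overall architecture matches the paper's: pass to a log resolution, regularize, obtain a uniform $L^\infty$ estimate by a Ko{\l}odziej-type capacity argument adapted to a domain containing the exceptional divisor, then upgrade to smooth convergence away from $E$ via Yau/CKNS estimates, and pass to the limit. The Ricci-flat conclusion is also handled the same way.

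There is, however, a genuine gap at the very first step. On the resolution $\tilde\cU$ the exceptional set $E$ is a positive-dimensional compact analytic subvariety, so there is \emph{no} smooth strictly plurisubharmonic function on $\tilde\cU$: restricting such a function to $E$ would give a strictly psh function on a compact complex manifold, contradicting the maximum principle. Consequently your regularized Dirichlet problem
\[
(\ddbar\tilde\varphi_\epsilon)^n=\Omega_\epsilon,\qquad \Omega_\epsilon>0 \text{ smooth},
\]
has no smooth solution for any $\epsilon>0$, and no smooth psh subsolution either. The phrase ``perturbed by a small K\"ahler form supported near $E$'' does not rescue this: a closed positive $(1,1)$-form cannot be compactly supported, and any potential you add will be constant along $E$. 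You correctly flag this non-Stein feature of $\tilde\cU$ as the obstacle to the $L^\infty$ estimate, but it already obstructs the \emph{existence} of $\tilde\varphi_\epsilon$.

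The paper's fix is to perturb the reference form, not just the right-hand side: one fixes a genuine K\"ahler form $\theta$ on (a compactification of) the resolution and solves, for $s\in(0,1)$,
\[
(\omega+s\theta+\ddbar\varphi_s)^n=(|\sigma_E|^2+s)(|\sigma_F|^2+s)^{-1}\Omega_Y,
\]
with $\omega=\ddbar\bigl(A(\rho-a)+\psi_1\bigr)$ only semipositive. For $s>0$ the background $\omega+s\theta$ is K\"ahler, so the equation makes sense. Even then a direct subsolution is only produced for homogeneous hypersurface singularities; for general Klt singularities the paper instead proves the uniform $C^0$ bound \emph{a priori} (exploiting that $\omega=\ddbar(\text{bounded function})$ to compare $\mathrm{Cap}_{\omega+s\theta}$ with the classical capacity) and then runs a continuity path in a second parameter $t$ at each fixed $s$ to obtain existence. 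Only after that does one let $s\to 0$. Your scheme needs this extra layer of perturbation of the reference form and the two-parameter continuity argument to get off the ground.
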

%\begin{remark}
%Here $\varphi\in$
%\end{remark}
As a corollary, we obtain singular solution to complex Monge-Ampe\`ere equation on a strongly pseudoconvex domain.
\begin{corollary}\label{Singularsolution}
Let $\Omega$ be a strongly pseudoconvex domain  in $\mathbb C^n$ with smooth boundary and $V:=dz_1\wedge dz_2 \ldots\wedge dz_n$ be a holomorphic volume form. Suppose origin $\{o\}$ is contained in $\Omega$, then for any $s>0$,  $f\in C^{\infty}(\Omega)$, $\psi\in C^{\infty}(\partial\Omega),$ and $\lambda=0$ or $1$, the following Monge-Amp\`ere  equation 
 \begin{equation}\label{Singular}
\left\{
\begin{array}{rcl}
&&(\ddbar \varphi)^n =e^{f+\lambda\varphi}V\wedge\bar V \\
&&\varphi|_{\partial\Omega}=\psi
\end{array}\right.
\end{equation}
admits a singular solution $\varphi\in C^{\infty}(\Omega\setminus \{o\})$  satisfying $$|\varphi-s\log(\sum_{i=1}^n|z_i|^2)|=\mathcal O(1).$$
\end{corollary}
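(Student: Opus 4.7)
The plan is to apply Theorem~\ref{main1} to a Klt model of the prescribed logarithmic singularity. Assume for simplicity that $s \in \mathbb{Z}_{>0}$; the general real case $s > 0$ should follow by an approximation/rescaling argument. Consider $\mathcal V_s = \mathbb{C}^n/\mu_s$ with $\mu_s$ acting diagonally. This has an isolated Klt singularity at the apex $p$, and the quotient map $\pi : \mathbb{C}^n \to \mathcal V_s$ has degree $s$ and is \'etale away from the origin. The $\mathbb{C}^*$-action on $\mathbb{C}^n$ descends to $\mathcal V_s$ making $\pi$ equivariant, and the squared radius $\rho = \sum|z_i|^2$ is $\mu_s$-invariant, descending to a function on $\mathcal V_s$ vanishing only at $p$.

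Restrict to a small $\mu_s$-invariant strongly pseudoconvex ball $B \subset \Omega$ around $o$ and set $\mathcal U := \pi(B) \subset \mathcal V_s$. Apply (the natural extension of) Theorem~\ref{main1} allowing a weight $e^{\tilde f + \lambda\tilde\varphi}$ on the right-hand side, to the equation $(\ddbar \tilde\varphi)^n = e^{\tilde f+\lambda\tilde\varphi}\Omega_{\mathcal V_s}$ on $\mathcal U$ with Dirichlet data $\tilde\psi$ on $\partial \mathcal U$, where $\tilde f$ is chosen with $\pi^*\tilde f = f|_B$ and $\Omega_{\mathcal V_s}$ is normalised so that $\pi^*\Omega_{\mathcal V_s} = V\wedge\bar V$ (using that $K_{\mathcal V_s}$ is $\mathbb Q$-Cartier). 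This yields $\tilde\varphi \in \textnormal{PSH}(\mathcal U) \cap C^\infty(\overline{\mathcal U}\setminus\{p\}) \cap L^\infty_{\mathrm{loc}}(\mathcal U\setminus\{p\})$. Its pull-back $\varphi_0 := \pi^*\tilde\varphi$ is a $\mu_s$-invariant PSH function on $B\setminus\{o\}$ satisfying $(\ddbar\varphi_0)^n = e^{f+\lambda\varphi_0}V\wedge\bar V$ there.

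To finish, extend $\varphi_0$ across $\partial B$ to a solution on $\Omega\setminus\{o\}$ by solving the classical smooth Dirichlet problem for the Monge-Amp\`ere equation on the annular region $\Omega\setminus\overline B$ via \cite{CKNS}, with boundary data $\psi$ on $\partial\Omega$ and $\varphi_0|_{\partial B}$ on $\partial B$. The inner boundary data $\tilde\psi$ is tuned by a fixed-point/matching argument on the Cauchy data along $\partial B$ so that the glued function is $C^\infty$ across $\partial B$, giving the desired $\varphi \in C^\infty(\Omega\setminus\{o\})$.

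\noindent\textbf{Main obstacle.} The crux is producing the precise asymptotic $|\varphi - s\log(\sum|z_i|^2)| = \mathcal O(1)$ at $o$. Theorem~\ref{main1} only guarantees $\tilde\varphi \in L^\infty_{\mathrm{loc}}(\mathcal U\setminus\{p\})$ and is silent about the behaviour at $p$; indeed, Ko{\l}odziej--EGZ-type boundedness results suggest $\tilde\varphi$ will typically be \emph{bounded} even at $p$, which would make $\varphi_0 = \pi^*\tilde\varphi$ bounded and not logarithmically singular. To force the correct blow-up I would pre-select $\tilde\psi$ and adjoin an explicit barrier construction at $p$ built from $\log\rho$ (whose pull-back equals $\log(\sum|z_i|^2)$, so it must be used with coefficient $s$ to produce the weight $s$ on the $\mathbb{C}^n$-side), applying the comparison principle on $\mathcal U\setminus\{p\}$ to pinch $\tilde\varphi$ between $s\log\rho - C_-$ and a bounded smooth strictly PSH upper barrier, thereby yielding $\tilde\varphi = s\log\rho + \mathcal O(1)$ at $p$. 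Engineering this pinching compatibly with the Dirichlet data on $\partial\mathcal U$, together with the approximation needed to pass from integer $s$ to arbitrary $s>0$, is the main technical task.
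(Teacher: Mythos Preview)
Your approach has a genuine gap, and in fact the paper proceeds quite differently.

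The core problem is exactly the one you flag as the ``main obstacle'': Theorem~\ref{main1} (and its natural variant with an $e^{\tilde f+\lambda\tilde\varphi}$ weight) produces a solution $\tilde\varphi$ that is \emph{bounded} near the singular point, by the Ko{\l}odziej/EGZ $C^0$-estimate. Pulling this back along $\pi:\mathbb{C}^n\to\mathbb{C}^n/\mu_s$ therefore gives a \emph{bounded} $\varphi_0$ on $B$, not one with $s\log\sum|z_i|^2$ blow-up. Your proposed fix --- pinching $\tilde\varphi$ between $s\log\rho-C_-$ below and a bounded upper barrier above --- does not force any singularity: a bounded function trivially satisfies both bounds, since $s\log\rho\to-\infty$ at $p$. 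To actually force the asymptotic you would need an \emph{upper} barrier of the form $s\log\rho+C$, i.e.\ a supersolution that is itself logarithmically singular, and constructing this is essentially the original problem. The quotient model therefore does no work here. (The gluing across $\partial B$ is a separate serious issue: matching two Monge--Amp\`ere solutions to be $C^\infty$ across a hypersurface is not a standard operation and would require a delicate Nash--Moser-type argument you have not supplied. The integer restriction on $s$ is also artificial.)

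The paper's proof avoids all of this by a direct change of variables on the blow-up $\pi:Bl_o\mathbb{C}^n\to\mathbb{C}^n$. One observes that $\log\sum|z_i|^2=\log|\sigma_E|^2_{h_E}+\mathcal O(1)$ for a hermitian metric $h_E$ on the exceptional divisor, and sets $\phi:=\varphi-s\log|\sigma_E|^2_{h_E}-A\rho$ with $\rho$ the defining function of $\Omega$. By the Kodaira lemma, $\omega:=A\ddbar\pi^*\rho+s\ddbar\log h_E$ is a genuine K\"ahler form on $\pi^{-1}(\Omega)$ for $A\gg1$, and the equation for $\phi$ becomes a Dirichlet problem $(\omega+\ddbar\phi)^n=e^F|\sigma_E|^{2(\lambda s+n-1)}_{h_E}\,dW$ with smooth boundary data $\psi-s\log|\sigma_E|^2_{h_E}$ on $\partial\Omega$ and right-hand side in $L^p$. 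The same perturbation/continuity argument as in the proof of Theorem~\ref{main1} then gives a \emph{bounded} solution $\phi$, and $\varphi=\phi+s\log|\sigma_E|^2_{h_E}+A\rho$ has exactly the prescribed singularity. The key idea you are missing is to \emph{subtract the singular profile first} and solve for the bounded remainder, rather than hoping the singularity will emerge from a Klt model.
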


\section{Preliminaries On Algebraic Singularity}
In this section, we introduce some standard definitions on singularity theory from algebraic geometry.
\begin{definition} \label{sing} Let $X$ be a normal projective variety such that $K_X$ is a $\mathbb{Q}$-Cartier divisor. Let $ \pi :  Y \rightarrow X$ be a log resolution and $\{E_i\}_{i=1}^p$ the irreducible components of the exceptional locus $Exc(\pi)$ of $\pi$. There there exists a unique collection $a_i\in \mathbb{Q}$ such that 
$$K_Y = \pi^* K_X + \sum_{i=1}^{ p } a_i E_i .$$ Then $X$ is said to have log-canonical (resp.\ klt) singularities if $$a_i\geq -1~ \text{(resp.\ $a_i>-1$) for all $i$}. $$
%
%\begin{enumerate}

%\item[$\bullet$] terminal singularities if  $a_i >0$, for all $i$.

%\item[$\bullet$] canonical singularities if $a_i \geq 0$, for all $i $.

%\item[$\bullet$]  log terminal singularities if $a_i > -1$, for all $i $.

%\item[$\bullet$]  log canonical singularities if $a_i \geq -1$, for all $i$.

%\end{enumerate}
\end{definition}

\begin{definition}
$(X,p)$ is said to be  a homogeneous hypersurface singularity if $X$ is an affine variety defined by one homogeneous polynomial $f$. 
\end{definition}

\begin{example} Let $X:=\{\sum_{i=1}^{i=n}z^m_i=0\}$ with $1<m<n$, then $(X,0)$ is a homogeneous hypersurface Klt singularity.
\end{example}

We briefly explain why the above example is a Klt singularity. Fixing a cone resolution $\pi:Y\to X$ of $X$, then there is only one exceptional divisor $E$ which is isomorphic to the projective variety $\tilde X\subset \mathbb{CP}^{n-1}$ defined by $\{\sum_{i=1}^{i=n}z^m_i=0\}$. Now the difference of canonical class can be written as $$K_Y=\pi^*K_X+aE,$$ where $a$ is constant to be determined. By adjunction formula, we have $$K_{\tilde X}=(a+1)E|_{\tilde X}.$$ 
Note that when $m<n$, $-K_{\tilde X}$ is an ample divisor and that the exceptional divisor $E$ when restricted on $\tilde X$ is an anti ample divisor, this implies that $a+1>0$, which is the requirement of Klt singularity. 
%\end{proof}

Let us also introduce the Kodaira Lemma which is quite useful.
 \begin{lemma}\cite[Lemma 2.62]{KM}\label{lem:kodaira} Let $\pi:Y\to X$ be a birational morphism. Also let $\omega$ be a K\"ahler form on $X$ and $\theta$ be a K\"ahler form on $Y$. Assume that $Y$ is quasi projective and $X$ is $\mathbb Q$ factorial. 
 Then there exists an effective simple normal crossing divisor $D$ supporting on the exceptional locus,  a hermitian metric $h_D$ on the line bundle $D$ and a constant $s_0>0$ such that  $$\pi^*\omega + s\ddbar\log h_D > 0,$$ for all $0<s\leq s_0$. By adjusting the coefficients of $D$ we may assume that $s_0=1$ and that there exists a constant $\be>0$ such that $$\pi^*\omega+\ddbar\log h_D > \be\theta.$$ 
 \end{lemma}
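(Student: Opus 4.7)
The plan is to split the argument into an algebraic step (producing the divisor $D$) followed by a differential-geometric step (producing the metric $h_D$ and verifying the positivity estimates). For the algebraic step, I would invoke the Negativity Lemma from birational geometry: since $\pi:Y\to X$ is a projective birational morphism to a $\mathbb{Q}$-factorial quasi-projective variety, there exists an effective $\pi$-exceptional $\mathbb{Q}$-Cartier divisor $D_0$ supported on $\mathrm{Exc}(\pi)$ such that $-D_0$ is $\pi$-ample. After clearing denominators and, if needed, refining by a further log resolution compatible with the support of $D_0$, the resulting $D$ has simple normal crossing support and still satisfies that $-D$ is $\pi$-ample.

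For the differential-geometric step, the relative ampleness of $-D$ lets me choose a smooth Hermitian metric $h_D$ on $\OO_Y(D)$ so that the closed $(1,1)$-form $\ddbar\log h_D$ (representing the class $c_1(-D)$) is strictly positive in every $\pi$-vertical direction at every point of $Y$; the standard construction is to embed $Y\hookrightarrow \mathbb{P}^N\times X$ via global sections of a high power $\OO_Y(-kD)$, pull back the Fubini--Study form from the first factor, and take the $k$-th root. Next I would prove $\pi^*\omega+s\ddbar\log h_D>0$ on $Y$ for small $s$ by two observations: away from $\mathrm{Exc}(\pi)$ the form $\pi^*\omega$ is already K\"ahler, so small perturbations stay positive; near $\mathrm{Exc}(\pi)$, $\pi^*\omega$ is semipositive with kernel exactly along the $\pi$-vertical directions, and the fiberwise positivity of $\ddbar\log h_D$ is precisely what is needed to cancel this degeneracy. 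A pointwise linear-algebra argument combined with the compactness of a fixed neighborhood of $\mathrm{Exc}(\pi)$ then yields a uniform $s_0>0$. Rescaling $D\mapsto s_0 D$ (and $h_D\mapsto h_D^{s_0}$) normalizes $s_0=1$; the final inequality $\pi^*\omega+\ddbar\log h_D>\be\theta$ is then an immediate compactness consequence, since any strictly positive continuous $(1,1)$-form on a compact set dominates a positive multiple of the reference K\"ahler form $\theta$.

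The main obstacle is the algebraic step: producing an effective $\pi$-exceptional divisor $D$ with $-D$ being $\pi$-ample and with SNC support. The fiberwise positivity of $-D$ is the single ingredient that makes the subsequent differential-geometric positivity argument go through, so essentially all the non-routine content of the lemma is encoded in this existence statement; once it is available, the metric construction and the two inequalities reduce to standard Fubini--Study plus compactness manipulations. This is exactly what \cite[Lemma 2.62]{KM} provides in the quasi-projective, $\mathbb{Q}$-factorial setting.
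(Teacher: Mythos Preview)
The paper does not give its own proof of this lemma; it is stated with a citation to \cite[Lemma 2.62]{KM} and used as a black box. So there is nothing to compare against on the paper's side. Your outline is the standard argument and is essentially correct: the algebraic core is the existence of an effective $\pi$-exceptional divisor $D$ with $-D$ $\pi$-ample (obtained, as you say, by pushing forward an ample divisor on $Y$, pulling back via $\mathbb{Q}$-factoriality of $X$, and invoking the Negativity Lemma), and the differential-geometric part is the routine Fubini--Study/compactness positivity argument you describe.

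Two small caveats. First, your phrase ``refining by a further log resolution'' would change $Y$; in the paper's setting $\pi$ is already a log resolution, so $\mathrm{Exc}(\pi)$ is already simple normal crossing and no further blow-up is needed---you should just say this rather than appeal to an extra resolution. Second, your compactness arguments (for the uniform $s_0$ and for the constant $\be$) tacitly assume you are working over a relatively compact set. The lemma as stated does not assume $X$ or $Y$ compact; in the paper this is harmless because everything is applied on the preimage of the bounded domain $\cU$, whose closure is compact and contains the (compact) exceptional fibre over the isolated singular point. It would be cleaner to make this explicit: the positivity $\pi^*\omega + s\,\ddbar\log h_D>0$ holds for small $s$ uniformly on any compact subset of $Y$, which is all that is ever used.
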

 
\section{Proof of Main Theorem \ref{main1}}
We have to prescribe singularities of the solution $\varphi$ to obtain a canonical K\"ahler-Einstein current on $\cU$, where $\cU$ is part of an affine variety $X$ as in (\ref{domain1}). To do so, we   lift all the data to a log resolution $\pi: Y \rightarrow X$. 
By adjunction formula:
 $$K_Y = \pi^* K_X + \sum_i a_i E_i - \sum_j b_j F_j, ~a_i\geq 0, ~ 0< b_j \leq 1. $$

%We approximate  equation (\ref{ocmp})  in  the following way. We pull back all the data from $X$ to $Y$. 

Let $\sigma_{E}$ be the defining section for $E=\sum_{i=1}^I a_i E_i$ and $\sigma_F$ be the defining section for $F=\sum_{j=1}^J  b_j F_j$ (possibly multivalued). We  equip the line bundles associated to $E$ and $F$ with smooth hermitian metric $h_E$, $h_F$ on $Y$. Recall that domain $\cU$ is defined in (\ref{domain1}). Let $\Omega_Y$ be a smooth strictly positive volume form on $\pi^{-1}(\cU)$,  
defined by 
$$ \Omega_Y  = ( |\sigma_E|^2_{h_E} )^{-1} |\sigma_F|^2_{h_F} \Omega_X. $$
Then lifting equation \eqref{ocmp1} to $\pi^{-1}(\cU)\subset Y$, we have 
\begin{equation}
\left\{
\begin{array}{rcl}
&&(\pi^*\ddbar \varphi)^n =  ( |\sigma_F|^2_{h_F} )^{-1} |\sigma_E|^2_{h_E} \Omega_Y, \\
&&\pi^*\varphi|_{\partial\cU}=\psi.\\
\end{array}\right. 
\end{equation}
Abusing notation, we still denote the domain $\pi^{-1}(\cU)$ by $\cU$.
Let $\theta$ be a fixed smooth K\"ahler form on $Y$ (Note that $Y$ is quasi projective and we may choose $\theta$ as a K\"ahler form on some compactification $\bar Y$ of $Y$.) and we consider the following perturbed family of complex Monge-Amp\`ere equations on $\pi^{-1}(\cU)$ for $s \in (0, 1)$, 
\begin{equation}\label{scmp}
\left\{
\begin{array}{rcl}
&&(s\theta + \ddbar \varphi_s)^n =  (|\sigma_E|^2_{h_E}+ s)  (|\sigma_F|^2_{h_F}+ s)^{-1}\Omega_Y, \\
&&\varphi_s|_{\partial\cU}=\psi.
\end{array}\right.
\end{equation}
 For each $s>0$, we shall first get solution $\varphi_s$ for equation \eqref{scmp}. When $s=0$, equation \eqref{scmp} coincides with equation \eqref{ocmp1}.

Firstly, let us rewrite the equation (\ref{scmp}). 
We choose an arbitrary smooth extension $\psi_1$ of $\psi$, which is supported on a neighborhood of $\partial\cU.$ In particular, $\psi_1$ vanishes on an open neighbourhood of the preimage of the singular point $p$.
Recall that $\rho$ is smooth on $\mathbb C^n$, hence $\pi^*\rho$ is smooth on $\pi^{-1}\cU$. By choosing A  large enough, we define a semipositive 1-1 form $\omega$ on $\cU$ \begin{equation}\label{omega}\omega:=A\ddbar (\rho-a)+\ddbar\psi_1.\end{equation}   Now
$$s\theta+\ddbar\varphi_s=\omega+s\theta+\ddbar(\varphi_s-A(\rho-a)-\psi_1).$$
Let $$\phi_s:=\varphi_s-A(\rho-a)-\psi_1,M=A(\rho-a)+\psi_1,$$ then we can rewrite equations  \eqref{scmp} as a new family of equations with $\phi_s$ as unknown functions and with zero Dirichlet boundary.
\begin{equation}\label{scmp11}
\left\{
\begin{array}{rcl}
&&(\omega + s\theta + \ddbar \varphi_s)^n =  (|\sigma_E|^2_{h_E}+ s)  (|\sigma_F|^2_{h_F}+ s)^{-1}\Omega_Y, \\
&&\varphi_s|_{\partial\cU}=0.
\end{array}\right.
\end{equation}
The above equations hold for $\phi_s$, we abuse notation and still denote the unknown functions by
$\varphi_s$. Notice all the estimates we get for $\phi_s$ also hold for $\varphi_s$ due to the fact that $M$ is independent of $s$. Before we proceed, we do a remark about the $1$-$1$ form $\omega$ in (\ref{omega}) and $\theta$.
\begin{remark}\label{E}
Note that the positive 1-1 form $\ddbar\rho := \ddbar\log(1+\sum_{j=1}^n |z_j|^2)$ defined on $\mathbb {C}^n$ is indeed the restriction of the Fubini-Study metric on $\mathbb {CP}^{n}$, hence we may assume that $\omega  (\theta)$ is the restriction to $\pi^{-1}\cU$ of a closed semi positive (K\"aher) $1$-$1$ form on some projective variety $\bar Y$, where $\bar Y$ is the blow up of $\mathbb {CP}^n$ along some subvariety.  
\end{remark}
From now on, we will focus on equations \eqref{scmp11}.

\begin{lemma}\label{blowup} Let $(X,p)$  be  a homogeneous hypersurface singularity and $\cU$ be defined as in (\ref{domain1}), then for fixed $s>0$, there exists a subsolution to the equation (\ref{scmp11}), i.e., a smooth function $\Phi$ satisfying 
\begin{equation}
\left\{
\begin{array}{rcl}
&&(\omega + s\theta + \ddbar \Phi)^n \geq (|\sigma_E|^2_{h_E}+ s)  (|\sigma_F|^2_{h_F}+ s)^{-1}\Omega_Y, \\
&&\Phi|_{\partial\cU}=0.
\end{array}\right.
\end{equation}
\end{lemma}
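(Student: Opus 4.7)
The plan is to take $\Phi$ to be a large multiple of the pulled-back defining function of $\cU$. Set
\[
\Phi := B(\pi^*\rho - a),
\]
where $B > 0$ is a constant to be chosen large in terms of $s$. The boundary condition $\Phi|_{\partial \cU} = 0$ is immediate since $\rho|_{\partial\cU} = a$, and $\Phi$ is smooth on $\pi^{-1}(\overline{\cU})$ because $\rho$ is smooth on $\mathbb{C}^n$ and $\pi$ is holomorphic. Expanding gives
\[
\omega + s\theta + \ddbar\Phi = (A+B)\pi^*\ddbar\rho + \ddbar\psi_1 + s\theta.
\]
I will use the fact that $\psi_1$ was chosen to vanish on a neighbourhood of the exceptional divisor $\pi^{-1}(p)$; on that neighbourhood the form reduces to $(A+B)\pi^*\ddbar\rho + s\theta$, while away from the exceptional divisor $\pi^*\ddbar\rho$ is strictly positive and absorbs $\ddbar\psi_1$ once $A+B$ is large. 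Hence $\omega + s\theta + \ddbar\Phi$ is K\"ahler on $\pi^{-1}(\overline{\cU})$.

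The analytic heart of the proof is a pointwise eigenvalue comparison in a neighbourhood of the exceptional divisor. For the homogeneous hypersurface singularity the resolution $\pi:Y\to X$ is a single blow-up of the cone point, so the exceptional divisor is a single irreducible projective hypersurface $\widetilde X$. In local coordinates adapted to the blow-up, with $z_n$ normal to $\widetilde X$ and $w_1,\dots,w_{n-1}$ tangent, $\pi^*\ddbar\rho$ has one eigenvalue of order $1$ and $n-1$ eigenvalues of order $|z_n|^2$. Expanding the $n$-fold wedge $((A+B)\pi^*\ddbar\rho + s\theta)^n$ binomially yields a lower bound
\[
(\omega + s\theta + \ddbar\Phi)^n \ \geq \ c\,(A+B+s)\bigl((A+B)|z_n|^2 + s\bigr)^{n-1}\theta^n
\]
for some $c>0$ near $\widetilde X$, of order $Bs^{n-1}\theta^n$ on $\widetilde X$ itself and of order $B^n\theta^n$ outside a tubular neighbourhood of the exceptional divisor.

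For fixed $s>0$ the right-hand side $(|\sigma_E|^2_{h_E}+s)(|\sigma_F|^2_{h_F}+s)^{-1}\Omega_Y$ is a smooth positive $(n,n)$-form on the compact set $\pi^{-1}(\overline{\cU})$, hence bounded above by $C_s\theta^n$ for some constant $C_s$ depending on $s$. Choosing $B$ sufficiently large --- the worst estimate coming from $\widetilde X$ itself and requiring $B\gtrsim C_s/s^{n-1}$ --- secures the required inequality pointwise. The main obstacle is the eigenvalue bound in the transition region where $|z_n|$ is small but nonzero, and in particular showing that the possibly large factor $(|\sigma_F|^2_{h_F}+s)^{-1}$ on the right is dominated by the lower bound on the left. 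The single blow-up structure in the homogeneous hypersurface case makes this tractable: one can split $\pi^{-1}(\overline{\cU})$ into regions according to the size of $|z_n|^2$ relative to $s/B$ and verify the inequality directly in each region.
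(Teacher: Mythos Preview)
Your approach is essentially the same as the paper's: take $\Phi$ to be a large multiple of the pulled-back defining function $\rho-a$, and exploit the fact that $\pi^*\ddbar\rho$ keeps a definite amount of positivity in the direction normal to the exceptional divisor while $s\theta$ supplies the tangential positivity. The paper carries out the explicit matrix computation on the ambient blow-up $BL_o\mathbb{C}^n$ to verify your asserted eigenvalue structure (showing $\pi^*\omega>\tfrac{1}{2}\sqrt{-1}dz_i\wedge d\bar z_i$ near $E$), and then observes that the normal bundle of $E'$ in the strict transform $X'$ is a subbundle of the normal bundle of $E$ in $BL_o\mathbb{C}^n$, so the normal positivity survives restriction; you assert this eigenvalue picture directly on $X'$ without that reduction, which is the one step you should make explicit. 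Your final paragraph about a ``transition region'' and the factor $(|\sigma_F|^2_{h_F}+s)^{-1}$ is unnecessary: for fixed $s>0$ the right-hand side is a smooth bounded volume form on the compact set $\pi^{-1}(\overline{\cU})$, so a single choice of $B$ large (of order $C_s/s^{n-1}$, exactly as you noted on $\widetilde X$) works uniformly.
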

\begin{proof}
Let us recall the blowing up of a point construction. 
\begin{equation}\label{blowuppoint}
BL_{o}\mathbb C^n:=\{(z_1,z_2\dots z_n; l_1,l_2,\dots l_n)\in \mathbb C^n \times \mathbb{CP}^{n-1}|z_il_j=z_jl_i\}
\end{equation}On the affine chart $U_i:= \{l_i\neq 0\}$,  we have coordinates $(z_i,u_j=\frac{l_j}{l_i})$, with $j=1,2\dots n$ and $j\neq i$. On $U_i$, the exceptional divisor $E$ is defined by $E:=\{z_i=0\}$. Without lose of generality, we may assume at origin $\{o\}$,
$\omega=\sum_{i=1}^{i=n}\sqrt{-1}dz_i\wedge d\bar{z_i}$. Now consider the pullback  form $\pi^*\omega$ at point $q\in E$,  

\begin{align*}\pi^*\omega&=(1+\sum_{i\neq j}|u_j|^2)\sqrt{-1}dz_i\wedge d\bar z_i+\sqrt{-1}\sum_{i\neq j}|z_i|^2du_j\wedge d\bar u_j\\&+\sqrt{-1}\sum_{i\neq j}u_j\bar z_idz_i\wedge d\bar u_j+\sqrt{-1}\sum_{i\neq j}z_i\bar u_jdu_j\wedge d\bar z_i
\end{align*}
It is clear that $\pi^*\omega$ vanishes along $E$, but it is nonvashing on the normal direction of divisor $E$ sitting in $BL_{o}\mathbb C^n$. More precisely,  we claim that $\pi^*\omega>\frac{1}{2}\sqrt{-1}dz_i\wedge d\bar z_i$, when $|z_i|$ is sufficiently small. The claim will follow from the simple linear algebra fact that the following matrix

\begin{align}\label{eq-gij-new}
  \begin{pmatrix}
	\frac{1}{2}+\sum_{j\neq i}|u_j|^2 & Y\\ \bar Y^T &	|z_i|^2Id_{n-1}  
	\end{pmatrix}
\end{align}
is semipositive, where $Y=(u_1\bar z_i,u_2\bar z_i\cdots,\widehat {u_i\bar z_i}, \cdots, u_n\bar z_i)$.

This shows that we have definite amount of positivity of $\pi^*\omega$ along the normal direction of $E$.
Now since $E$ is compact, for fixed $s$ we can make $\det (A\pi^*\omega+s\theta)$ arbitrary large by choosing a sufficiently large constant $A$. 
%Hence we may produce a subsolution for equation for fixed s.

For general  homogeneous hypersurface singularity $X:=\{\sum_{i=1}^{n}z_i^m=0\}$ in $\mathbb C^n$, let $X'$ be the strict transformation of $X$ in $BL_{o}\mathbb C^n$ and let $E':=E\cap X'$ be the exceptional divisor, which is a smooth hypersurface in $\mathbb {CP}^n$ defined by $E':=\{\sum_{i=1}^{n}z_i^m=0\}$. Again we consider the form $\pi^*\omega$ near $E'$. Note that the normal bundle of $E'$ sitting in $X'$ is a subbundle of the normal bundle of $E$ sitting in $BL_{o}\mathbb C^n$. Now by the argument in the previous paragraph, we conclude that for fixed $s$ we can make $\det (A\pi^*\omega+s\theta)$ arbitrary large by choosing a large constant $A$.
%\end{proof}
%\begin{lemma}\label{sub} Let $(X,p)$  be  a homogeneous hypersurface singularity, then for each fixed $s>0$, there exists a unique smooth solution $\varphi_s$ to equation \eqref{scmp11}.
%\end{lemma}
%\begin{proof} 

Now  we construct a subsolution to equation (\ref{scmp}). Recall that $\cU:=\{\rho<a\}$, hence $\phi:=\rho-a$ is equal to $0$ on $\partial\cU$. By choosing $A$ sufficiently large, $A\phi$ will be the desired subsolution for fixed $s>0$. It is well-known that existence of subsolution to equation (\ref{scmp11}) implies existence of solution, see for example \cite{Kol1,Guan}.
 \end{proof}

%\begin{proof}
%When $t=0$, $\varphi_{s,t}=0$ is a solution of equation \ref{scmp1}. We use standard continuity method to find solution of for $0\le t\le 1$. 
%\end{proof}

%\begin{theorem}
%\begin{equation}
%
%\left\{
%\begin{array}{rcl}
%&&(\chi + s\theta + \ddbar %\varphi_s)^n =  (|\sigma_E|^2_{h_E}+ s)  (|\sigma_F|^2_{h_F}+ s)^{-1}\Omega_Y. \\
%
%&&\varphi_{s_{|\partial\Omega}}=\psi
%\end{array}\right.
%\end{equation}
%\end{theorem}
%\begin{proof}
%For simplicity, we omit the %dependence on $s$. We only need to show that we can find a function $\varphi$ such that the $\varphi=0$ when restricted to $\partial\Omega$ and that the left hand side of equation (\ref{scmp11}) can be arbitrary large.  Notice that  $\theta$ is K\"ahler metric, and $\ddbar\rho$ is uniform positive in the normal direction of $E$, we conclude that the volume form $(s\theta+A\ddbar\rho)^n$ can be arbitrary large by choosing $A$ large. Hence $A\rho$ is a subsolution of (\ref{scmp11}). 
%\end{proof}

\begin{remark}
For general isolated Klt singularity, although it is expected that $\pi^*\omega$ would have certain positivity on the normal direction,  it is not clear to the author how to construct a subsolution to equation (\ref{scmp11}) directly. \end{remark} 

As remarked above, for a fixed $s>0$, we are unable to solve the equation (\ref{scmp11}) for a general Klt singularity by the subsolution approach. However, thanks to the pluripotential theory,  we may obtain uniform a apriori estimates for  $\varphi_s$ which enables us to complete the proof of Theorem \ref{main1} by 2 steps. We remark that the uniform $C^0$ estimate of  Monge-Amp\`ere equations (\ref{scmp11})  have been obtained in many different settings  such as  on  unit ball in \cite{Kol1} and  variety with Klt singularity in \cite{EGZ,D-P,Z}.

\textbf{Step 1:} We first generalize the deep results of \cite{Kol1} to our geometric domain $\cU$ and obtain  uniform $C^0$ bound for $\varphi_s$. The main result in this step is:

\begin{proposition}\label{C0}
 Let $\cU,\theta,\omega$ be the data defined in equation (\ref{scmp}). Assume that $\varphi_s, s<1$ is a smooth solution to the following equation on $\mathcal U$:
 \begin{equation}\label{scmp1}
\left\{
\begin{array}{rcl}
&&(\omega + s\theta + \ddbar \varphi_s)^n = e^f\Omega_{Y}, \\
&&{\varphi_s}|_{\partial\mathcal U}=0,
\end{array}\right.
\end{equation} 
where  $\Omega_Y$ is smooth positive volume on $\mathcal U$ and $f\in C^\infty(\mathcal U)$ satisfying $$\int_{\mathcal U}e^{pf}\Omega_Y\le Q, ~~\textnormal{for some constant} ~~p>1,$$
 then we have $|\varphi_s|\le C$, where $C=C(\cU,\omega,\eta,Q)$.

\end{proposition}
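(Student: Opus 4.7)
The plan is to adapt Kolodziej's pluripotential-theoretic $L^p$ estimate to the Dirichlet problem on $\cU$ with the degenerating family of reference forms $\omega + s\theta$. The argument splits into separately establishing the upper and lower bounds for $\varphi_s$, with the lower bound being the main content. For the upper bound, I would use the maximum principle exploiting the global potential $M = A(\rho - a) + \psi_1$ for $\omega$; for the lower bound, I would run a capacity iteration in the spirit of Kolodziej's compact case, using the compactification $\bar Y$ of Remark \ref{E} to access a fixed dominating K\"ahler form.

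\textbf{Upper bound.} Since $\omega = \ddbar M$ on $\cU$, the function $v_s := \varphi_s + M$ satisfies $\ddbar v_s \geq -s\theta$ and has smooth bounded boundary data. I would extend $v_s$ to $\bar Y$ (via the $(s\theta)$-plurisubharmonic envelope with a fixed continuous boundary extension) to obtain an $(s\theta)$-psh function on $\bar Y$. Classical $L^\infty$ bounds on bounded quasi-plurisubharmonic envelopes on the compact K\"ahler $\bar Y$ then yield $\varphi_s \leq C$ uniformly in $s \leq 1$.

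\textbf{Lower bound.} Fix a K\"ahler form $\eta$ on $\bar Y$ with $\eta \geq \omega + s\theta$ for all $s \in (0,1]$, and let $\mathrm{Cap}(\cdot)$ denote the Bedford--Taylor capacity on $\cU$ defined via $\eta$-plurisubharmonic functions. The argument rests on three ingredients:
\begin{enumerate}[label=(\roman*)]
\item (Volume--capacity) Inherited from Kolodziej's compact-case inequality on $(\bar Y, \eta)$: $\mathrm{vol}(K) \leq C \exp(-c / \mathrm{Cap}(K)^{1/n})$ for all Borel $K \subset \cU$.
\item ($L^p$-to-capacity) H\"older's inequality combined with (i) and the hypothesis $\int_\cU e^{pf} \Omega_Y \leq Q$ yields $\int_K e^f \Omega_Y \leq F(\mathrm{Cap}(K))$, where $F(x) \to 0$ faster than any polynomial as $x \to 0$.
\item (Capacity comparison) Using $\varphi_s|_{\partial\cU} = 0$ and the Bedford--Taylor comparison principle, for all $t, r > 0$,
\[ r^n \, \mathrm{Cap}(\{\varphi_s < -t-r\}) \leq \int_{\{\varphi_s < -t\}} (\omega + s\theta + \ddbar\varphi_s)^n = \int_{\{\varphi_s < -t\}} e^f \Omega_Y. \]
\end{enumerate}
Combining (ii) and (iii) produces a recursive inequality for $h(t) := \mathrm{Cap}(\{\varphi_s < -t\})$, and Kolodziej's De Giorgi--type iteration lemma then forces $h(T) = 0$ for some $T$ depending only on $\cU, \eta, p, Q$, giving $\varphi_s \geq -T$ uniformly in $s$.

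The hardest part will be verifying (i) and (iii) with constants \emph{uniform in $s$}. The volume--capacity inequality (i) must be set up so that the embedding $\cU \hookrightarrow \bar Y$ does not introduce $s$-dependence, and the comparison principle in (iii) must properly exploit the zero boundary condition on $\partial\cU$ in place of the normalization $\sup\varphi = 0$ standard in the compact case. The structural input that makes this go through is the uniform domination $\omega + s\theta \leq \eta$ by a fixed K\"ahler form on $\bar Y$; this, together with the fact that $\bar Y$ is K\"ahler despite $\cU$ itself containing compact exceptional divisors in $Y$, is what lets Kolodziej's framework transfer essentially unchanged.
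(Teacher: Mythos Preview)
Your overall strategy --- run a Ko{\l}odziej-type iteration combining a capacity--mass comparison with an $L^p$-to-capacity estimate inherited from the compactification $\bar Y$ --- is the right one, and the paper follows the same broad outline. However, your implementation with a single \emph{fixed} $\eta$-capacity has a genuine gap in step (iii).

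The comparison principle, applied to $\varphi_s$ (which is $\eta$-psh since $\theta_s := \omega + s\theta \leq \eta$) and an $\eta$-psh competitor $rw - t - r$ with $0 \leq w \leq 1$, yields
\[
r^n \,\mathrm{Cap}_\eta(\{\varphi_s < -t-r\}) \;\leq\; \int_{\{\varphi_s < -t\}} (\eta + \ddbar\varphi_s)^n,
\]
with $(\eta + \ddbar\varphi_s)^n$ on the right, \emph{not} $(\theta_s + \ddbar\varphi_s)^n$. The difference expands as $\sum_{k\geq 1} \binom{n}{k}(\eta-\theta_s)^k \wedge (\theta_s+\ddbar\varphi_s)^{n-k}$, and these mixed masses over sublevel sets are not controlled by the $L^p$ hypothesis on $e^f\Omega_Y$; bounding them via Chern--Levine--Nirenberg would require an a priori $L^\infty$ bound on $\varphi_s$, which is what you are proving. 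Put differently, the easy inequality is $\mathrm{Cap}_{\theta_s} \leq \mathrm{Cap}_\eta$, but closing your loop would require the reverse, and since $\theta_s$ degenerates along the exceptional locus as $s\to 0$ while $\eta$ does not, no uniform reverse inequality is available.

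The paper instead runs the iteration with the \emph{moving} capacity $\mathrm{Cap}_{\theta_s}$, so that the analogue of (iii) is immediate, and secures uniformity in $s$ through two separate inputs. First, a measure--capacity inequality $\lambda(K) \leq C\,\mathrm{Cap}_{\theta_s}(K,\cU)^2$ with $C$ independent of $s$, obtained by comparing with the global capacity on $\bar Y$ (your step (i)--(ii) idea, but with $\theta_s$ rather than $\eta$). Second --- and this is the point you are missing --- an explicit polynomial decay $\mathrm{Cap}_{\theta_s}(\{\varphi_s < -l\}) \leq C l^{-2n}$, needed to bound the starting level $l_0$ of the iteration uniformly in $s$. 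The key observation here is that $\omega = \ddbar\rho$ is globally $\ddbar$-exact on $\cU$, whence $\mathrm{Cap}_\omega(K,\cU) \leq A\,\mathrm{Cap}(K,\cU)$ (ordinary PSH capacity) with $A$ depending only on $\|\rho\|_{L^\infty(\cU)}$; one then bootstraps through $\mathrm{Cap} \leq \mathrm{Cap}_{\theta_s/l}$ and applies comparison with $\varphi_s/l$. This exactness of $\omega$ is the structural fact that makes the estimate uniform in $s$; the domination $\theta_s \leq \eta$ alone does not substitute for it.
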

\begin{remark} From equation (\ref{scmp1}), $\Delta_\theta\varphi_s>-C$, for some constant $C$ independent of $s$. Hence the upper bound of $\varphi_s$ can be obtained by standard maximum principle.
\end{remark}
\begin{remark}
We also remark that Dirichlet problem of complex Monge-Amp\`ere equation with semipositive reference form is considered by \cite{PS3}. In that note the authors study geodesic rays in
the space of K\"ahler potentials. In their case the right hand side is 0 and hence can be obtained without using pluripotential theory. 
\end{remark}
We do some preparations for the proof of Proposition \ref{C0}. The following comparison principle is well known.
\begin{lemma}\label{Comparison}
 Let $\mathcal U$ be as above and $\theta$ be a K\"ahler metric on $\mathcal U$, then for $u,v\in PSH(\theta)\cap L^{\infty}(\cU)$ satisfying $\lim_{\eta\rightarrow z}(u-v)(\eta)\ge 0$ for any $z\in\partial\cU$ we have
 $$\int_{\{u<v\}}(\theta+\ddbar v)^n\le\int_{\{u<v\}}(\theta+\ddbar u)^n.$$
 \end{lemma}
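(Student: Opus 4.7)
The plan is to follow the classical Bedford--Taylor argument. Fix $\epsilon>0$; the boundary assumption $\lim_{\eta\to z}(u-v)(\eta)\geq 0$ for $z\in\partial\cU$ ensures that the open set $V_\epsilon := \{u < v-\epsilon\}$ is relatively compact in $\cU$. Introduce the auxiliary bounded $\theta$-PSH function $w_\epsilon := \max(u, v-\epsilon)$, which coincides with $u$ off $\overline{V_\epsilon}$ and with $v-\epsilon$ on $V_\epsilon$. The strategy is to compare the Monge--Amp\`ere mass of $w_\epsilon$ with that of $u$ in two different ways and then let $\epsilon\downarrow 0$.

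Two standard facts from Bedford--Taylor's theory of the Monge--Amp\`ere operator on bounded PSH functions do the work. First, by locality of $(\theta+\ddbar\cdot)^n$ on the open sets where one argument of $\max$ strictly dominates, together with the positivity of the contribution coming from the coincidence set $\{u=v-\epsilon\}$, one has the pointwise inequality
$(\theta+\ddbar w_\epsilon)^n \;\geq\; \mathbf{1}_{V_\epsilon}(\theta+\ddbar v)^n + \mathbf{1}_{\{u>v-\epsilon\}}(\theta+\ddbar u)^n$.
Second, since $w_\epsilon-u$ is bounded and vanishes outside the compact set $\overline{V_\epsilon}\subset\cU$, iterated integration by parts in the telescoping identity $(\theta+\ddbar w_\epsilon)^n-(\theta+\ddbar u)^n = \ddbar(w_\epsilon-u)\wedge T$, with $T$ a sum of mixed closed positive products, yields conservation of total MA mass: $\int_\cU (\theta+\ddbar w_\epsilon)^n = \int_\cU (\theta+\ddbar u)^n$.

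Combining the two and rearranging,
$\int_{V_\epsilon}(\theta+\ddbar v)^n \;\leq\; \int_{\{u\leq v-\epsilon\}}(\theta+\ddbar u)^n \;\leq\; \int_{\{u<v\}}(\theta+\ddbar u)^n$,
where the last step uses $\{u\leq v-\epsilon\}\subset\{u<v\}$. Letting $\epsilon\downarrow 0$, the sets $V_\epsilon$ exhaust $\{u<v\}$ from inside, so the left-hand integrals converge to $\int_{\{u<v\}}(\theta+\ddbar v)^n$ by monotone convergence, completing the proof. The one technical point to be careful about is the justification of the two pluripotential facts above for merely bounded (rather than smooth) $\theta$-PSH functions: the product $(\theta+\ddbar u)^n$ is defined inductively via $\ddbar\bigl(u(\theta+\ddbar u)^{k-1}\bigr)$ together with its continuity along monotone approximating sequences, and the Stokes-type mass conservation then passes through the limit. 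These are standard in Bedford--Taylor's theory and can be cited rather than reproved here.
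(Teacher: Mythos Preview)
The paper does not actually prove this lemma; it simply states it as ``well known'' and moves on. Your argument is the standard Bedford--Taylor proof of the comparison principle and is correct.

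One small caveat worth flagging: since both $u$ and $v$ are merely upper semicontinuous, the set $V_\epsilon=\{u<v-\epsilon\}$ need not be \emph{open} as you write. This is harmless, however: the max inequality for Monge--Amp\`ere measures holds as an inequality of Borel measures regardless, and what you really need for the Stokes-type mass conservation is that $\mathrm{supp}(w_\epsilon-u)\subset\{u\le v-\epsilon\}$ is compact in $\cU$, which follows directly from the boundary hypothesis $\liminf_{\eta\to z}(u-v)(\eta)\ge 0>-\epsilon$ for every $z\in\partial\cU$.
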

 We also introduce some  concepts in pluripotential theory.
 For a compact set $K\subset\subset \cU$ (here we still use $\cU$ to denote $\pi^{-1}\cU$ for simplicity), we define 
\begin{equation}
\begin{aligned}
U_{K,\cU}(x):&=sup\{u(x)|u\in PSH(\cU),u|_K=-1, -1\le u< 0\},\\
 U_{\theta,K,\cU}(x):&=sup\{u(x)|u\in \theta- PSH(\cU),u|_K=-1,-1\le u< 0\},\\
 Cap(K,\cU):&=sup\{\int_K(\ddbar u)^n, u\in PSH(\cU),-1<u<0\},\\ Cap_\theta(K,\cU):&=sup\{\int_K(\theta+\ddbar u)^n,u\in \theta -PSH(\cU),-1< u<0\}.\\
 \end{aligned}
\end{equation}
 
 The following two lemmas show that the capacity $Cap_{\theta}(K,\cU)$ can be computed by extremal function $U^*_{\theta,K}$ (For simplicity, we omit $\cU$). Similar results are proved on strongly pseudoconvex domain on $\mathbb C^n$ in \cite{BT} and on compact complex manifold in \cite{G-Z}. Since we haven't find a reference where the set-up is exactly the same as we are considering here, we  give a detailed proof. 
 \begin{lemma}\label{supp}
   Let $\theta$ be a K\"ahler form on $\cU$ and $K\subset\subset\cU$ be compact set. Then the measure $(\theta+\ddbar U^*_{\theta,K})^n$ is supported on $K \cup \{U^*_{\theta,K}=0\}$, where $U^*_{\theta,K}$ is the upper semi regularization of function $U_{\theta,K}$.
 \end{lemma}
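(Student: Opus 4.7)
The plan is to proceed in two stages. First I verify that $U^*_{\theta,K}\in\theta\text{-PSH}(\cU)$ with $-1\le U^*_{\theta,K}\le 0$, and then I localize and show that $(\theta+\ddbar U^*_{\theta,K})^n$ puts no mass on the open set $\Omega:=\cU\setminus(K\cup\{U^*_{\theta,K}=0\})$. The first step is routine: the defining family is uniformly bounded above and closed under finite maxima, so Choquet's lemma gives a countable subfamily with the same upper envelope off a pluripolar set, and the upper regularization of a countable supremum of $\theta$-PSH functions is again $\theta$-PSH. The two-sided bound is inherited from the competitors, and $U^*_{\theta,K}|_K=-1$ off a pluripolar subset of $K$ because every competitor equals $-1$ on $K$.

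For the support claim I will run the standard balayage argument, adapted from \cite{BT} and \cite{G-Z}. Fix $x_0\in\Omega$. Since $x_0\notin K$, $K$ is compact, and $U^*_{\theta,K}$ is upper semicontinuous with $U^*_{\theta,K}(x_0)<0$, there is a coordinate ball $B$ centered at $x_0$ with $\overline B\subset\cU\setminus K$ and a number $\delta>0$ such that $U^*_{\theta,K}<-\delta$ on $\overline B$. On $B$ the K\"ahler form $\theta$ admits a smooth local potential, so the classical Perron--Bremermann theory of \cite{BT} is available. I then form the envelope
\[
\tilde U(x):=\sup\{w(x):w\in\theta\text{-PSH}(\cU),\ -1\le w\le 0,\ w\le U^*_{\theta,K}\text{ on }\cU\setminus B\},
\]
and its upper regularization $\tilde U^*$. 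The goal is to verify that $\tilde U^*\in\theta\text{-PSH}(\cU)$, that $-1\le\tilde U^*\le 0$, that $\tilde U^*$ agrees with $U^*_{\theta,K}$ on $\cU\setminus B$ (in particular on $K$), and that $(\theta+\ddbar\tilde U^*)^n=0$ on $B$ by maximality of the Perron--Bremermann solution.

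Granted these properties, I compare: $\tilde U^*\ge U^*_{\theta,K}$ by construction, while $\tilde U^*$ is, up to a pluripolar set, a competitor in the definition of $U_{\theta,K}$, so $\tilde U^*\le U^*_{\theta,K}$. Hence $\tilde U^*=U^*_{\theta,K}$ off a pluripolar set, and the Bedford--Taylor continuity of the Monge--Amp\`ere operator along bounded sequences yields $(\theta+\ddbar U^*_{\theta,K})^n=(\theta+\ddbar\tilde U^*)^n=0$ on $B$. Since $x_0\in\Omega$ was arbitrary, this completes the support statement.

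The main obstacle is technical and stems from the paper's nonstandard normalization, which enforces both the two-sided bound $-1\le u<0$ and the exact equality $u|_K=-1$ on every competitor. One must check carefully that the envelope $\tilde U^*$ retains the lower bound $-1$ (using that its boundary data on $\partial B$ satisfy $U^*_{\theta,K}\ge -1$) and that the pluripolar ambiguity of $U^*_{\theta,K}$ along $\partial K$ does not disqualify $\tilde U^*$ as a competitor. Both points follow from the quasi-continuity of $\theta$-PSH functions and the fact that their Monge--Amp\`ere measures charge no pluripolar set, but they require transplanting the estimates of \cite{BT} and \cite{G-Z} from the strictly pseudoconvex Euclidean ball and the compact K\"ahler manifold setting to the present bounded pseudoconvex domain $\cU$ with a semi-positive reference form.
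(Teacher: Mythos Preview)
Your balayage strategy is the same as the paper's, and the outline you give would work in the compact K\"ahler setting of \cite{G-Z} where the competitor class is defined by an inequality $u\le -1$ on $K$. The difficulty you flag at the end is real, however, and the paper's proof is organized precisely to avoid it rather than to push through it.

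The gap is in the step ``$\tilde U^*$ is, up to a pluripolar set, a competitor in the definition of $U_{\theta,K}$, so $\tilde U^*\le U^*_{\theta,K}$.'' Because the defining family here requires the \emph{exact} pointwise equality $u|_K=-1$ and the strict inequality $u<0$, knowing that $\tilde U^*|_K=-1$ only off a pluripolar set does not produce an admissible competitor, and there is no comparison principle that gives $w\le U^*_{\theta,K}$ on $B$ from $w\le U^*_{\theta,K}$ on $\cU\setminus B$ alone. Quasi-continuity and the fact that Monge--Amp\`ere measures do not charge pluripolar sets are not enough to close this; you would still need to manufacture, from $\tilde U^*$, a genuine element of the defining family, and no standard trick (subtracting $\epsilon$, taking $\max$ with $-1$, etc.) restores the exact constraint on $K$.

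The paper sidesteps this by first proving an approximation statement: $U_{\theta,K}$ is unchanged if one restricts the defining family to \emph{continuous} $\theta$-PSH functions. This uses Blocki--Kolodziej regularization on a slightly smaller domain together with a gluing against $\rho-a$ near $\partial\cU$ to keep the boundary condition. Choquet's lemma then yields an increasing sequence of continuous competitors $\varphi_j$ with $(\lim\varphi_j)^*=U^*_{\theta,K}$. The balayage is performed on each $\varphi_j$: one solves the homogeneous Dirichlet problem on a small ball $B$ with continuous boundary data $\varphi_j|_{\partial B}$, and because $\varphi_j$ is continuous one checks directly that the balayaged function $\tilde\varphi_j$ is again a competitor ($\tilde\varphi_j=-1$ on $K$ since $K\cap B=\emptyset$, and $\tilde\varphi_j<0$ on $B$ by the maximum principle for small $B$). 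Thus $(\lim\tilde\varphi_j)^*=U^*_{\theta,K}$ as well, and continuity of the Monge--Amp\`ere operator along increasing sequences gives $(\theta+\ddbar U^*_{\theta,K})^n=0$ on $B$. In short: your plan becomes a proof once you insert the reduction to continuous competitors; without it, the competitor verification you defer to the last paragraph does not go through as stated.
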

 \begin{proof} Firstly, we prove that 
 $$U_{\theta,K}(x)=sup\{u(x)|u\in \theta- PSH(\cU)\cap C(\cU),u|_K=-1,-1\le u< 0\},$$
 where $C(\cU)$ is the set of continuous function on $\cU$. It suffices to prove that for any $x\in \cU$, $\epsilon\in (0,1)$ and any $\theta-$ PSH $(\cU)$ function $u$ satisfying $u|_K=-1$ and $-1\le u< 0$, there exists a continuous $\theta-$ PSH function $v$
such that $u(x)-\epsilon<v(x)<0, v|_K=-1$. For $\delta>0$ small,
let $$\cU_\delta:=\{x\in\cU,\rho>a-\delta \}.$$
We may assume $\rho-a\leq -1$ on $K$. If $\delta>0$ is choosen sufficiently small, then $\rho-a>-\frac{\epsilon}{2}$ on $\cU_{2\delta}$. Use the standard regularization of $\theta-$ PSH function for $u$ (see for example \cite[Thm 2]{BK}), there exists a smooth function $\theta-$ PSH function $q
\geq u$ defined on $\cU\setminus\cU_{\delta}$ satisfying $q-\epsilon<-\frac{\epsilon}{2}$ and  $q-\epsilon\leq -1$ on $K$.   Now the desired function $v$ can be constructed as follows: 
\begin{equation}
\left\{
\begin{array}{rcl}
&&v=\max\{q-\epsilon, \rho-a,-1\} \,\,\, on \,\,\,\, \cU\setminus \cU_\delta, \\
&&v=\rho-a  \,\,\,\,\,\,\quad\quad\quad\quad\quad\quad\quad on\,\, \cU_\delta.\\
\end{array}\right.
\end{equation}  It follows from Choquet's Lemma
that
there exists an increasing sequence of continuous $\theta$-PSH functions $\varphi_j$ such that $\varphi_j = - 1$ on $K$ , $\varphi_j<0$ on $\cU$, and $U_{\theta,K}^*= (\lim \varphi_j)^*$. Now let $x$ be a point not contained in $K \cup \{U^*_{\theta,K}=0\}$ and $B$ be an open ball containing $x$. Let $\tilde\varphi_j$ be the unique continuous functions  solving the Dirichlet problem, 
\begin{equation}
\left\{
\begin{array}{rcl}
&&\int_B(\theta +\ddbar\tilde\varphi_j)^n =0, \\
&&\tilde{\varphi_j}|_{\partial B}={\varphi_j}_{|\partial B}.
\end{array}\right.
\end{equation} 

%so that $\int_B(\theta+\ddbar\tilde\varphi_j)^n= 0$ in $B$.
  Notice that  $\Delta_\theta\tilde\varphi_j\geq -1, \tilde{\varphi_j}_{|\partial B}<0$. By maximum principle, if $B$ is chosen small enough, then $\tilde\varphi_j<0$ in $B$.  Also let $\tilde\varphi_j=\varphi_j$ on $\cU\setminus B$, then $\tilde\varphi_j \leq 0$ on $\cU$, while $\varphi_j= \tilde\varphi_j = -1$ on $K$. Therefore
$U_{\theta,K}^*=(\lim_j\nearrow\tilde\varphi_j)^*$, hence $\int_B(\theta+\ddbar U_{\theta,K}^*)^n=0$ in ball $B$, which proves our lemma.
 \end{proof}
\begin{lemma}\label{extremal}
Let $\theta$ be any K\"ahler form on $\cU$. Then for a compact set $K$ in $\cU$, we have $Cap_{\theta}(K,\cU)=\int_{K}(\theta+\ddbar U^*_{\theta,K})^n$, where $U^*_{\theta,K}$ is the upper semi regularization of the function $U_{\theta,K}$.
\end{lemma}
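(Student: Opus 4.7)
The plan is to prove the equality by establishing the inequalities $Cap_\theta(K, \cU) \geq \int_K(\theta + \ddbar U^*_{\theta, K})^n$ and $Cap_\theta(K, \cU) \leq \int_K(\theta + \ddbar U^*_{\theta, K})^n$ separately, using an approximation argument for the first and the comparison principle (Lemma \ref{Comparison}) together with the support lemma (Lemma \ref{supp}) for the second.

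For the direction $\geq$: the extremal function $U^*_{\theta, K}$ need not satisfy the strict bounds $-1 < u < 0$ required of a $Cap_\theta$ candidate, so I regularize by setting $u_\epsilon := (1-\epsilon) U^*_{\theta, K} - \epsilon/2$ for small $\epsilon > 0$. A direct check gives $-1 + \epsilon/2 \leq u_\epsilon \leq -\epsilon/2$, so $u_\epsilon$ is admissible and hence $\int_K (\theta + \ddbar u_\epsilon)^n \leq Cap_\theta(K, \cU)$. Expanding
\[
(\theta + \ddbar u_\epsilon)^n = \bigl((1-\epsilon)(\theta + \ddbar U^*_{\theta, K}) + \epsilon \theta\bigr)^n = \sum_{k=0}^{n} \binom{n}{k}(1-\epsilon)^{k} \epsilon^{n-k} (\theta + \ddbar U^*_{\theta, K})^{k} \wedge \theta^{n-k},
\]
the terms with $k<n$ each carry a factor $\epsilon^{n-k}$ that tends to zero, while their mixed masses on the compact set $K$ are finite by Bedford-Taylor theory; the $k=n$ term tends to $(\theta + \ddbar U^*_{\theta, K})^n$. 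Thus $\int_K (\theta + \ddbar u_\epsilon)^n \to \int_K (\theta + \ddbar U^*_{\theta, K})^n$ as $\epsilon \to 0$, yielding the first inequality.

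For the direction $\leq$: let $v$ be an arbitrary candidate with $-1 < v < 0$. By compactness of $K$ and upper semi-continuity of $v$, $\inf_K v > -1$, and I fix $\delta > 0$ with $v - \delta > -1$ on $K$. Since $U^*_{\theta, K} = -1$ quasi-everywhere on $K$, this forces $K \subset \{U^*_{\theta, K} + \delta < v\}$ quasi-everywhere. Next, $U^*_{\theta, K} \to 0$ at $\partial\cU$ (by strong pseudoconvexity of $\partial\cU$ and a standard barrier construction) while $v < 0$, so the boundary hypothesis $\liminf_{\eta \to z}((U^*_{\theta, K} + \delta) - v)(\eta) \geq \delta$ is satisfied; applying the comparison principle gives
\[
\int_{\{U^*_{\theta, K} + \delta < v\}} (\theta + \ddbar v)^n \leq \int_{\{U^*_{\theta, K} + \delta < v\}} (\theta + \ddbar U^*_{\theta, K})^n.
\]
By Lemma \ref{supp} the right-hand measure is supported on $K \cup \{U^*_{\theta, K} = 0\}$; on $\{U^*_{\theta, K} = 0\}$ we have $U^*_{\theta, K} + \delta = \delta > 0 > v$, which precludes intersection with $\{U^*_{\theta, K} + \delta < v\}$, so the right side is bounded by $\int_K (\theta + \ddbar U^*_{\theta, K})^n$. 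The left side dominates $\int_K (\theta + \ddbar v)^n$, since $K$ lies in the integration set up to a pluripolar set and bounded $\theta$-PSH Monge-Amp\`ere measures do not charge pluripolar sets. Taking the supremum over $v$ yields the claimed inequality.

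The main technical hurdle I anticipate is verifying $U^*_{\theta, K} \to 0$ at $\partial\cU$, which is needed for the boundary hypothesis of the comparison principle; this should follow from the strong pseudoconvexity of $\partial\cU$ via explicit barriers, but must be checked in the present lifted setting on $\pi^{-1}(\cU) \subset Y$ where $\theta$ is a global K\"ahler form.
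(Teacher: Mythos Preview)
Your approach mirrors the paper's: prove each inequality separately, using the comparison principle and the support lemma for $\leq$, and the (near-)candidacy of $U^*_{\theta,K}$ for $\geq$. Your treatment of $\geq$ via the approximants $u_\epsilon = (1-\epsilon)U^*_{\theta,K} - \epsilon/2$ is in fact more careful than the paper's, which simply asserts that $U^*_{\theta,K}$ takes values in $[-1,0]$ and treats it as admissible.

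For the $\leq$ direction there is a slip: the claim that $\inf_K v > -1$ follows from compactness of $K$ and upper semicontinuity of $v$ is false (upper semicontinuity controls suprema, not infima), so your choice of $\delta$ is not justified as written. Fortunately the $\delta$-shift is unnecessary, and the paper proceeds with $\delta = 0$: the inclusion $K \subset \{U^*_{\theta,K} < v\}$ already holds off the pluripolar set $\{U_{\theta,K} \ne U^*_{\theta,K}\}$ because $U_{\theta,K} \equiv -1 < v$ on $K$; the boundary hypothesis $\liminf(U^*_{\theta,K} - v) \ge 0$ at $\partial\cU$ holds since $U^*_{\theta,K} \to 0$ there while $v < 0$; and on $\{U^*_{\theta,K} = 0\}$ one has $U^*_{\theta,K} = 0 > v$, so this set is disjoint from $\{U^*_{\theta,K} < v\}$. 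You already invoke the pluripolar-set argument in your last step, so dropping $\delta$ costs nothing.

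Regarding your stated technical hurdle: the boundary behaviour $U^*_{\theta,K} \to 0$ on $\partial\cU$ follows from the barrier constructed in the proof of Lemma~\ref{supp}, where the candidate equals $\rho - a$ on the collar $\cU_\delta$; hence $U_{\theta,K} \ge \rho - a$ near $\partial\cU$, and since $U^*_{\theta,K}\le 0$ the limit is $0$.
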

\begin{proof}
  By Lemma \ref{supp}, $(\theta+\ddbar U^*_{\theta,K})^n$ is supported on $K\cup \{U^*_{\theta,K=0}\}$ (In our setting, the set $\{ U^*_{\theta,K}=0\}$ could be larger than the set $\partial\cU$). And since $U^*_{\theta,K}$ itself is $\theta$- PSH with value between $-1$ and $0$. Hence $\int_K(\theta+\ddbar U^*_{\theta,K})^n\le Cap_\theta(K,\cU)$.
On the other hand, fixing a $\theta$- PSH function $u$ with $-1<u<0$, we have \begin{align*}\int_K(\theta+\ddbar u)^n&\le\int_{\{U^*_{\theta,K}<u\}}(\theta+\ddbar u)^n\\&\le\int_{\{U^*_{\theta,K}<u\}}(\theta+\ddbar U^*_{\theta,K})^n\\&=\int_K(\theta+\ddbar U^*_{\theta,K})^n.\end{align*}
The first inequality is due to the facts that $(\theta+\ddbar u)^n$ doesn't charge mass on the pluripolar $\{U_{\theta,K}\ne U^*_{\theta,K}\}$ and $K\subset \{U_{\theta,K}<u\}$. The second  inequality is due to comparison principle Lemma \ref{Comparison} and the third inequality is due to the facts that $\{U^*_{\theta,K}<u\} \cap \{U^*_{\theta,K}=0\}=\emptyset$ and $(\theta+\ddbar U^*_{\theta,K})^n$ is supported on $K\cup \{U^*_{\theta,K}=0\}$  .
 This  finishes the proof.
 \end{proof}
 
Now we return to the equation (\ref{scmp1}). Let
$$\theta_s:=\omega+s\theta.$$
For fixed $s$, we are interested in the set where $\varphi_s$ is small. We define:
 $$U(l):=\{\varphi_s<-l\},a(l):=Cap_{\theta_s}(U(l),\cU),b(l):=\int_{U(l)}(\theta_s+\ddbar \varphi_s)^n.$$
We should mention that $U(l),a(l),b(l)$ all depend on $s$. 

The following lemma roughly says that $Cap_{\theta_s}(U(l),\cU)$ can be controlled by $b(l+t)$. The argument given below is taken from \cite{Kol2}.
\begin{lemma}\label{C-M}
 Let $\varphi_s$ be the solution of equation of (\ref{scmp1}).
Then for any $0<t<1, l>2$, we have $t^nCap_{\theta_s}(U(l+t),\cU)\le \int_{U(l)}(\theta_s+\ddbar \varphi_s)^n$. 

 \end{lemma}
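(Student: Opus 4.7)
The plan is to bound $\int_{U(l+t)}(\theta_s+\ddbar v)^n$ uniformly for every admissible test function $v$ in the defining class of $Cap_{\theta_s}$, then take the supremum. So fix $v\in \theta_s\text{-PSH}(\cU)$ with $-1<v<0$ and introduce the auxiliary comparison function
\[ w := tv - l. \]
This function is again $\theta_s$-PSH since $\theta_s+\ddbar w=(1-t)\theta_s+t(\theta_s+\ddbar v)\ge 0$, and the bounds $-l-t\le w\le -l$ follow from $-1\le v\le 0$. In particular $w\le -l<0=\varphi_s$ on $\partial\cU$, so the boundary hypothesis of the comparison principle (Lemma \ref{Comparison}) is satisfied with room to spare.

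Next I would verify the two set inclusions
\[ U(l+t)\ \subset\ \{\varphi_s<w\}\ \subset\ U(l). \]
The right inclusion is immediate since $w\le -l$. For the left inclusion, if $\varphi_s(x)<-(l+t)$ then $\varphi_s(x)<-l-t\le tv(x)-l=w(x)$ because $tv(x)\ge -t$. These inclusions are what will convert an estimate on the intermediate set $\{\varphi_s<w\}$ into the desired capacity estimate on $U(l+t)$ in terms of mass on $U(l)$.

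Now apply Lemma \ref{Comparison} with $u=\varphi_s$ and the pair $(\varphi_s,w)$ on the open set $\{\varphi_s<w\}$, obtaining
\[ \int_{\{\varphi_s<w\}}(\theta_s+\ddbar w)^n\ \le\ \int_{\{\varphi_s<w\}}(\theta_s+\ddbar\varphi_s)^n. \]
To extract the factor $t^n$, expand $\theta_s+\ddbar w=(1-t)\theta_s+t(\theta_s+\ddbar v)$ and use the pointwise inequality $(\theta_s+\ddbar w)^n\ge t^n(\theta_s+\ddbar v)^n$ (which follows from multilinearity of the wedge product together with nonnegativity of the mixed Monge--Amp\`ere terms for semipositive $(1,1)$-forms). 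Combining this with the inclusions above gives
\[ t^n\int_{U(l+t)}(\theta_s+\ddbar v)^n\ \le\ \int_{U(l)}(\theta_s+\ddbar\varphi_s)^n. \]
Taking the supremum over all admissible $v$ yields the desired inequality $t^n Cap_{\theta_s}(U(l+t),\cU)\le b(l)$.

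There is no substantive obstacle here, as the argument is the standard Kolodziej-type capacity comparison transplanted from \cite{Kol2} to the twisted big class $\theta_s$. The only points requiring care are checking that $w$ is genuinely $\theta_s$-PSH and bounded, and verifying the boundary comparison $\varphi_s-w\ge l>0$ on $\partial\cU$ so that Lemma \ref{Comparison} applies cleanly; both are handled by the simple bookkeeping above.
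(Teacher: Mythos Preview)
Your proof is correct and is a clean variant of the standard Kolodziej argument, but it differs from the paper's route in a noteworthy way. The paper builds its auxiliary function from the \emph{solution}, setting $W:=\frac{1}{t}(\varphi_s+l)$, and then compares $W$ against the extremal function $U^*_{\theta_s,K}$ on a compact $K\subset U(l+t)$; this requires invoking both the comparison principle (Lemma~\ref{Comparison}) and the extremal characterization of capacity (Lemma~\ref{extremal}), and concludes by approximating $Cap_{\theta_s}(U(l+t),\cU)$ from inside by compact sets. You instead build the auxiliary function from an arbitrary \emph{test function}, setting $w:=tv-l$, and compare $w$ directly against $\varphi_s$; this uses only Lemma~\ref{Comparison} and the definition of capacity, bypassing the extremal function entirely. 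Your approach is thus slightly more elementary and, as a bonus, sidesteps the delicate point that $W=\frac{1}{t}(\varphi_s+l)$ is only $\frac{1}{t}\theta_s$-PSH rather than $\theta_s$-PSH, whereas your $w$ is genuinely $\theta_s$-PSH because $\theta_s+\ddbar w=(1-t)\theta_s+t(\theta_s+\ddbar v)\ge 0$.
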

 \begin{proof}
Consider any compact regular set $K\subset U(l+t)$, the $\theta_s-PSH$ function $W:=\frac{1}{t}(\varphi_s+l)$, and the set $V:=\{W<U^*_{\theta_s,K}\}$. We can verify the inclusions $K\subset V\subset U(l)$.  
 Once we have the inclusions, we can apply Lemma \ref{Comparison} and Lemma \ref{extremal} to conclude:
 $$Cap_{\theta_s}(K,\cU)= \int_K(\theta_s+\ddbar U^*_{\theta_s,K})^n\le\int_V(\theta_s+\ddbar U^*_{\theta_s,K})^n\le\int_V(\theta_s+\ddbar W)^n$$$$\le t^{-n}\int_V(\theta_s+\ddbar \varphi_s)^n\le t^{-n}\int_{U(l)}(\theta_s+\ddbar \varphi_s)^n=t^{-n}b(l).$$ 
 Since $Cap_{\theta_s}(U(l+t),\cU)$ can be approximated by $Cap_{\theta_s}(K,\cU)$ (cf.), we finish the proof. 
 \end{proof}

% \begin{remark}
% In the above lemma, $$ actually only depends on the positivity of %$\omega$ when restricted on $B_1$ and not depends on the positivity of %$\omega$ on whole $\Omega'$
% \end{remark}

We also want to show the Monge-Amp\`ere measure $\lambda(K):=\int_{K}(\theta_s+\ddbar\varphi_s)^n$ can by controlled by capacity.
\begin{lemma}\label{M-C}For any compact set $K\subset\subset\cU$, we have $\lambda(K)\le CCap^2_{\omega_s}(K,\cU)$ for some constant $C$ independent of $s$.
 
\end{lemma}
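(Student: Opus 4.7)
The plan is to combine H\"older's inequality with a volume--capacity comparison estimate in the Kolodziej style. First, applying H\"older's inequality to the $L^p$ bound on $e^f$:
\[
\lambda(K) = \int_K e^f \Omega_Y \le \left(\int_K e^{pf}\Omega_Y\right)^{1/p} \mu(K)^{(p-1)/p} \le Q^{1/p}\, \mu(K)^{(p-1)/p},
\]
where $\mu := \Omega_Y$ denotes the reference smooth volume on $\cU$. So the lemma reduces to establishing a volume--capacity inequality of the form $\mu(K) \le C \cdot Cap_{\theta_s}(K, \cU)^{\gamma}$ with a sufficiently large exponent $\gamma \ge 2p/(p-1)$ and a constant $C$ independent of $s \in (0, 1]$.

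To produce such a bound I would work with the $\theta_s$-extremal function $u = U^*_{\theta_s, K}$. By Lemma \ref{extremal}, $Cap_{\theta_s}(K, \cU) = \int_K (\theta_s + \ddbar u)^n$; and by construction $u = -1$ quasi-everywhere on $K$ while $-1 \le u \le 0$ on $\cU$. Since $(-u)^k \ge 1$ on $K$ for any $k \ge 1$, one has $\mu(K) \le \int_\cU (-u)^k \Omega_Y$. I would then iterate integration by parts in the Chern--Levine--Nirenberg spirit, distributing the $k$ copies of $u$ onto successive factors of $\theta_s + \ddbar u$, to bound $\int_\cU (-u)^k \Omega_Y$ by a polynomial expression in $Cap_{\theta_s}(K, \cU)$ whose degree grows with $k$; choosing $k$ large yields $\gamma \ge 2p/(p-1)$. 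Substituting back into the H\"older estimate closes the lemma with exponent $2$ on the right hand side.

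The main obstacle is ensuring that the constant in the volume--capacity estimate is independent of $s$. As $s \to 0$, the reference form $\theta_s = \omega + s\theta$ degenerates along the exceptional set $E \cup F$, since the semipositive form $\omega$ vanishes along the preimage of $p$. The key to uniformity is the observation in Remark \ref{E} that both $\omega$ and $\theta$ are restrictions of closed semipositive/K\"ahler forms on the compactification $\bar Y$; in particular $\theta_s \le \omega + \theta$ for all $s \in (0,1]$, so the integration-by-parts against $\theta_s$-factors can always be dominated by the fixed K\"ahler form $\omega + \theta$ on $\bar Y$. This uniform upper bound, together with the fact that $\Omega_Y$ is smooth and $\cU$ is relatively compact, is what forces the implicit constants to be independent of $s$.
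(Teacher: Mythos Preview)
Your overall scheme --- H\"older's inequality followed by a uniform volume--capacity comparison --- is indeed the mechanism behind the result, but the volume--capacity step does not work as you describe it. You take $u=U^*_{\theta_s,K}$, the \emph{relative} extremal function, which satisfies $-1\le u\le 0$; hence $\int_\cU(-u)^k\,\Omega_Y\le \mu(\cU)$ trivially for every $k$, and Chern--Levine--Nirenberg iteration cannot squeeze a growing power of $Cap_{\theta_s}(K,\cU)$ out of this integral --- there is nothing to iterate, since each factor of $-u$ is already bounded by $1$. The standard polynomial (or exponential) volume--capacity inequalities proceed instead through the \emph{global} Siciak--Zahariuta extremal function $V_{\theta_s,K}$, which is unbounded above, combined with an Alexander--Taylor comparison $\sup V_{\theta_s,K}\le C\,Cap_{\theta_s}(K,\cU)^{-1/n}$ and a Skoda/Tian uniform integrability bound for normalised $\theta_s$-psh functions. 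Establishing those ingredients uniformly in $s$ on the domain $\cU$, where $\theta_s$ degenerates along the exceptional locus as $s\to 0$, is exactly the delicate point, and your sketch does not address it.

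The paper bypasses all of this by passing to a compactification. By Remark~\ref{E}, $\omega$ and $\theta$ extend to closed forms $\tilde\omega,\tilde\theta$ on a fixed projective manifold $\bar Y\supset\cU$, and the $L^p$ bound on $e^f$ feeds directly into the uniform compact-manifold estimate \cite[Cor.~3.1]{FGS}, giving $\lambda(K)\le C\,Cap_{\tilde\omega+s\tilde\theta}(K,\bar Y)^2$ with $C$ independent of $s$. The proof is then completed by the elementary monotonicity $Cap_{\tilde\omega+s\tilde\theta}(K,\bar Y)\le Cap_{\theta_s}(K,\cU)$, which holds because every competitor on $\bar Y$ restricts to one on $\cU$. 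Moving to the compact setting is precisely what makes the uniformity in $s$ come for free, and this is the step your proposal is missing.
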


\begin{proof} By Remark \ref{E}, the $1$-$1$ forms  $\omega,\theta$ on $\cU$ can be choosen as the restriction of closed $1$-$1$ forms $\tilde\omega,\tilde\theta$ on some projective manifold $\bar Y$. Using equation (\ref{scmp1}), we know that $\lambda(K)$ is $L^p$ integrable 
 with respect to a fixed
measure for some $p>1$. Then it is standard that $\lambda(K)<CCap_{\omega+s\tilde\theta}(K,Y)^2$  for some $C$ independent of $s$ (cf. \cite[Cor 3.1]{FGS}). Since $\cU\subset Y$, it is easy to see from the definition that $Cap_{\omega+s\tilde\theta}(K,Y)\le Cap_{\theta_s}(K,\cU)$. This finishes the proof.

\end{proof}

\begin{lemma}
For any compact set $K\subset\cU$, $lim_{s\rightarrow 0}Cap_{\theta_s}(K,\cU)=Cap_{\omega}(K,\cU)$.
\begin{proof}
We know that $Cap_{\theta_s}(K,\cU)=\int_{\cU}(\theta_s+\ddbar U^*_{K,\theta_s})^n$. And by definition $U^*_{K,\theta_s}$ is decreasing when $s\rightarrow 0$. By \cite[Prop 1.2]{EGZ}, $U^*_{K,\theta_s}$ will decrease to a $\omega-$ PSH function $U$ with $$\int_{{K}}(\theta_s+\ddbar U^*_{K,\theta_s})^n\rightarrow\int_{{K}}(\omega+\ddbar U)^n.$$ The right hand side  by definition is smaller than  $Cap_{\omega}(K,\cU)$, hence we finish the proof.
\end{proof}
\end{lemma}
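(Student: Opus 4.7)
The plan is to apply Lemma \ref{extremal}, which expresses $Cap_{\theta_s}(K,\cU)$ as the total mass of the Monge-Amp\`ere measure of the extremal function $U^*_{K,\theta_s}$, and then pass to the limit as $s\to 0$ via a monotone convergence argument in the spirit of \cite[Prop 1.2]{EGZ}.

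For one direction, any $\omega$-PSH function $u$ with $-1\le u\le 0$ is automatically $\theta_s$-PSH (since $\theta_s=\omega+s\theta\ge\omega$) and satisfies $(\theta_s+\ddbar u)^n\ge(\omega+\ddbar u)^n$ as measures on $K$, because the wedge expansion produces extra non-negative mixed terms. Taking the supremum over admissible $u$ yields $Cap_{\theta_s}(K,\cU)\ge Cap_{\omega}(K,\cU)$ uniformly in $s$.

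For the matching upper bound, the set of admissible $\theta_s$-PSH test functions shrinks as $s$ decreases, since any $\theta_{s_1}$-PSH function is automatically $\theta_{s_2}$-PSH whenever $s_1\le s_2$. Therefore the extremal functions $U^*_{K,\theta_s}$ are monotone decreasing as $s\searrow 0$, and since they remain in $[-1,0]$, they converge pointwise to a bounded upper semicontinuous limit $U$. Passing to distributional limits in $\theta_s+\ddbar U^*_{K,\theta_s}\ge 0$ shows $\omega+\ddbar U\ge 0$, i.e.\ $U$ is $\omega$-PSH, with $U|_K=-1$ and $-1\le U\le 0$, so $U$ is admissible for $Cap_{\omega}(K,\cU)$.

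The decisive analytic step is then to verify convergence of the Monge-Amp\`ere measures, namely
$$\int_K(\theta_s+\ddbar U^*_{K,\theta_s})^n\longrightarrow\int_K(\omega+\ddbar U)^n.$$
Since $\theta_s$ is a decreasing family of smooth positive forms converging to $\omega$ and $U^*_{K,\theta_s}$ is a uniformly bounded decreasing family of potentials, this fits the hypotheses of \cite[Prop 1.2]{EGZ} (equivalently, a Bedford-Taylor type convergence for decreasing sequences). The limit $\int_K(\omega+\ddbar U)^n$ is at most $Cap_{\omega}(K,\cU)$ by definition, and combined with the first inequality this squeezes $\lim_{s\to 0}Cap_{\theta_s}(K,\cU)=Cap_{\omega}(K,\cU)$. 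I expect the main subtlety to be the simultaneous variation of both the reference form $\theta_s$ and the potential $U^*_{K,\theta_s}$; the uniform boundedness together with the monotone decreasing structure is precisely what makes the wedge-product limit pass through.
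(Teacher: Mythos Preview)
Your proof is correct and follows essentially the same route as the paper: both use Lemma~\ref{extremal} to express $Cap_{\theta_s}(K,\cU)$ via the extremal function $U^*_{K,\theta_s}$, observe that these functions decrease as $s\searrow 0$ to an $\omega$-PSH limit $U$, and invoke \cite[Prop~1.2]{EGZ} to pass the Monge--Amp\`ere masses to the limit. The only difference is that you spell out the easy lower bound $Cap_{\theta_s}(K,\cU)\ge Cap_{\omega}(K,\cU)$, which the paper leaves implicit.
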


At last we need to show that the capacities have uniform decay independently of $s$.
\begin{proposition}\label{decay} Let $\varphi_s$ be the solution of equation (\ref{scmp1}), then $Cap_{\theta_{s}}(U(l+1),\cU)<C\frac{1}{l^{2n}}$ for some constant $C$ independent of $l$ and $\theta_s$.
\end{proposition}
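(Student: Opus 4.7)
The plan is to combine the two preceding lemmas into a self-improving recursion for the capacities $a(l) = Cap_{\theta_s}(U(l),\cU)$, then iterate it starting from a crude initial bound coming from a Chern-Levine-Nirenberg argument. Setting $t=1$ in Lemma \ref{C-M} and feeding the resulting bound into Lemma \ref{M-C} gives
\[
a(l+1) \le C\, a(l)^2, \qquad l > 2,
\]
with $C$ independent of $s$. Thus, once one establishes an initial smallness $a(l_0) \le (2C)^{-1}$ with $l_0$ uniform in $s$, setting $\epsilon_k := 2C\,a(l_0+k)$ turns the recursion into $\epsilon_{k+1} \le \tfrac{1}{2}\epsilon_k^2$, so $\epsilon_k \le 2^{-(2^k-1)}$. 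This super-exponential decay easily dominates $l^{-2n}$ for all sufficiently large $l$.

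To seed the iteration I would prove an initial polynomial estimate $a(l) \le M/l$ uniformly in $s$ by a Chern-Levine-Nirenberg type argument. Observing that $|\varphi_s| > l$ on $U(l)$, for any $\theta_s$-PSH function $u$ with $-1 \le u \le 0$ we have
\[
l \int_{U(l)} (\theta_s + \ddbar u)^n \le \int_\cU |\varphi_s|\,(\theta_s + \ddbar u)^n,
\]
and the right-hand side is uniformly bounded in $s$ and in $u$ by iterated integration by parts transferring the $\ddbar u$ factors onto $\varphi_s$, using the Dirichlet data $\varphi_s|_{\partial\cU} = 0$ (together with approximation of $u$ by smooth $\theta_s$-PSH functions vanishing on $\partial\cU$, as in Lemma \ref{supp}) to kill the boundary contributions, and invoking both the uniform MA mass bound $\int_\cU (\theta_s + \ddbar \varphi_s)^n \le Q^{1/p}\,\mathrm{vol}(\cU)^{1-1/p}$ from the $L^p$ hypothesis and a uniform $L^1$ bound on $\varphi_s$ obtained from an energy-type argument. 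Taking the supremum of the left side over admissible $u$ yields $a(l) \le M/l$, hence $a(l_0) \le (2C)^{-1}$ for $l_0 := 2CM$.

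Combining the two pieces yields the proposition: for $l \ge l_0$ the iteration gives super-exponential decay of $a(l)$, absorbed into a bound of the form $C'/l^{2n}$; for the remaining bounded range $l \in (0, l_0)$, the trivial bound $a(l+1) \le V_0 := Cap_{\theta_s}(\cU,\cU) < \infty$ (uniform in $s$ since $\theta_s \le \omega+\theta$) is dominated by $(V_0\, l_0^{2n})/l^{2n}$, and the constants are merged into a single $C$ independent of $l$ and of $\theta_s$.

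The principal obstacle is the Chern-Levine-Nirenberg step, where $\theta_s$ is only semipositive and in fact degenerates as $s \to 0$, and the domain $\cU \subset Y$ carries exceptional loci of $\pi$. The integration-by-parts computations must be carried out with care, and are made possible by: (i) the vanishing Dirichlet data on $\varphi_s$; (ii) smooth approximation of quasi-PSH functions with prescribed boundary behavior, along the lines of Lemma \ref{supp}; and (iii) the extension, noted in Remark \ref{E}, of $\omega$ and $\theta$ to smooth closed semipositive $(1,1)$-forms on a projective compactification $\bar Y$, which allows global Stokes-type manipulations to be justified uniformly in $s$.
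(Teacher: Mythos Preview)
Your recursion $a(l+1)\le C\,a(l)^2$ is correctly derived from Lemmas~\ref{C-M} and~\ref{M-C}, and the iteration you describe is essentially Lemma~\ref{lemma 3.4}. The real content of Proposition~\ref{decay}, however, is precisely to supply the \emph{seed} for that iteration, and this is where your argument has a gap. Your Chern--Levine--Nirenberg step asks for a uniform bound on $\int_{\cU}|\varphi_s|\,(\theta_s+\ddbar u)^n$ over all admissible $u$ and all $s\in(0,1)$. To get it you invoke a uniform $L^1$ bound on $\varphi_s$ ``from an energy-type argument'' that is never given, and you need the CLN inequality \emph{globally on $\cU$}, not on a compact subset; the standard CLN constant blows up as the compact set exhausts the domain, and your items (i)--(iii) address boundary terms and approximation but do not control this constant. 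Nor is it clear that the sublevel sets $U(l)$ stay uniformly away from $\partial\cU$ without a boundary gradient estimate, which in the paper's logic comes only after Proposition~\ref{C0}. In short, the seed $a(l)\le M/l$ is at least as hard as the proposition itself.

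The paper bypasses CLN entirely via a scaling trick that you have missed. The key observation (equation~\eqref{aa}) is that $\omega$ has a \emph{globally bounded potential} on $\cU$, namely $\omega=\ddbar(A(\rho-a)+\psi_1)$ with $\rho,\psi_1$ bounded; hence $Cap_\omega(K,\cU)\le A\,Cap(K,\cU)$ with $A$ depending only on $\|\rho\|_{L^\infty}$. Combining Lemmas~\ref{C-M} and~\ref{M-C} gives $Cap_{\theta_s}(U(l+1),\cU)\le A\,Cap(\overline{U(l)},\cU)^2$. Now one bounds $Cap(\overline{U(l)},\cU)$ directly: since any PSH function is $\tfrac{\theta_s}{l-\eta}$-PSH, $Cap\le Cap_{\theta_s/(l-\eta)}$, and the rescaled solution $\tfrac{\varphi_s}{l-\eta}$ is $\tfrac{\theta_s}{l-\eta}$-PSH, vanishes on $\partial\cU$, and is $<-1$ on $\overline{U(l)}$, so by the comparison principle and Lemma~\ref{extremal},
\[
Cap_{\theta_s/(l-\eta)}(\overline{U(l)},\cU)\;\le\;\int_{\overline{U(l)}}\Bigl(\tfrac{\theta_s}{l-\eta}+\ddbar\tfrac{\varphi_s}{l-\eta}\Bigr)^n\;=\;\frac{1}{(l-\eta)^n}\int_{\overline{U(l)}}(\theta_s+\ddbar\varphi_s)^n\;\le\;\frac{C}{(l-\eta)^n},
\]
the last step using only the uniform $L^p$ bound on the right-hand side of~\eqref{scmp1}. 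Squaring and letting $\eta\to0$ gives the claimed $C/l^{2n}$ in one stroke, with no $L^1$ estimate on $\varphi_s$ and no iteration.
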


\begin{proof}
 Our key observation is that on $\cU$, $\omega$ can be represented by $\ddbar\rho$, so
 \begin{equation}\label{aa}
 Cap_\omega(K,\cU)\le ACap(K,\cU),\end{equation} where $A$ only depends on  $|\rho|_{L^\infty(\cU)}$, which is bounded. This enables us to compare $Cap_{\theta_s}(K,\cU)$ with $Cap(K,\cU)$ uniformly.
 Now we conclude that:
 \begin{equation}
     \begin{aligned}
 Cap_{\theta_s}(U(l+1),\cU)&\le\int_{\bar U(l)}(\theta_s+\ddbar\varphi_s)^n=\lambda(\bar U(l))\le ACap^2_\omega(\bar U(l),\cU)\\
 &\le ACap^2(\bar U(l),\cU)\le ACap^2_{\frac{\theta_{s}}{l-\eta}}(\bar U(l),\cU)\\&=A\left(\int_{\bar U(l)}(\frac{\theta_s}{l-\eta}+\ddbar U^*_{\bar U(l),\frac{\theta_s}{l-\eta}})^n\right)^2,
 \end{aligned}
 \end{equation}
 where $l>>\eta>0$. Here we have used formula (\ref{aa}), Lemma  \ref{C-M} and Lemma \ref{M-C} and also constant $A$ might change from line to line.  Note $\varphi_s=0$ on $\partial\cU$ and $\frac{\varphi_s}{l-\eta}<-1$ on $\bar U(l)$,  by comparison principle we get 
 \begin{align*}\int_{\bar U(l)}(\frac{\theta_{s}}{l-\eta}+\ddbar U^*_{ \bar U(l),\frac{\theta_s}{l-\eta}})^n&\le\int_{\bar U(l)}(\frac{\theta_s}{l-\eta}+\ddbar \frac{\varphi_s}{l-\eta})^n\\&\le \frac{1}{(l-\eta)^n}\int_{\cU}(\theta_{s}+\ddbar \varphi_s)^n\\&\le C\frac{1}{(l-\eta)^n}.\end{align*}
 Now let $\eta\rightarrow 0$, then we finish the proof.

\end{proof}

The following lemma is well-known and its proof can be found e.g. in \cite{ EGZ}. %But for completeness we provide the sketched proof below.
\begin{lemma}\label{lemma 3.4} Let $F:[0,\infty)\to [0,\infty)$ be a non-increasing right-continuous function satisfying
$\lim_{l\rightarrow \infty} F(l) =0$. 
If there exist $\alpha, A>0$ such that for all $s>0$ and $0\le r\le 1$, 
\begin{equation*}\label{eqn:assumption1} rF(l+r) \leq A \left( F(l)\right)^{1+\alpha},\end{equation*}
then there exists $S = S(l_0, \alpha, A)$ such that $$F(l) = 0,$$ for all $l\ge S$, where $l_0$ is the smallest $l$ satisfying $\left( F(l)\right)^\alpha \leq (2A)^{-1}$.
\end{lemma}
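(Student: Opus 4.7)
The plan is a standard De Giorgi style iteration. I want to build an increasing sequence $l_0 < l_1 < l_2 < \cdots$ with increments $r_n := l_{n+1} - l_n \in (0,1]$ chosen so that (i) the hypothesis forces geometric decay $F(l_n) \le 2^{-n} F(l_0)$, and simultaneously (ii) the increments $r_n$ are summable, so $l_n$ converges to a finite $S$ depending only on $l_0$, $\alpha$, $A$. Then non-increasingness of $F$ together with $F(l_n) \to 0$ will force $F \equiv 0$ on $[S,\infty)$.

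Concretely, starting from $l_0$ as in the statement (so $A F(l_0)^\alpha \le \tfrac{1}{2}$), I would set
\[
r_n \;:=\; 2A\, F(l_0)^\alpha\, 2^{-n\alpha}, \qquad l_{n+1} \;:=\; l_n + r_n.
\]
Since $\alpha>0$, the sequence $r_n$ is decreasing and $r_0 = 2A F(l_0)^\alpha \le 1$, so the hypothesis applies at every step. The induction is a one-line check: if $F(l_n) \le F(l_0)\, 2^{-n}$, then
\[
F(l_{n+1}) \;\le\; \frac{A\, F(l_n)^{1+\alpha}}{r_n} \;\le\; \frac{A\, F(l_0)^{1+\alpha}\, 2^{-n(1+\alpha)}}{2A\, F(l_0)^{\alpha}\, 2^{-n\alpha}} \;=\; \frac{F(l_0)}{2^{n+1}},
\]
which closes the induction.

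The series $\sum_{n\ge 0} r_n = 2A\, F(l_0)^\alpha/(1 - 2^{-\alpha})$ is finite, so $l_n \nearrow S$ for
\[
S \;=\; l_0 + \frac{2A\, F(l_0)^\alpha}{1 - 2^{-\alpha}},
\]
a constant depending only on $l_0$, $\alpha$, $A$. Since $F$ is non-increasing and $l_n \le S$, one has $F(S) \le F(l_n) \to 0$, hence $F(S)=0$, and monotonicity yields $F(l)=0$ for all $l \ge S$. (Right-continuity is only needed in the backward direction to guarantee that the threshold $l_0$ in the statement is attained; if $F(l_n)=0$ at some finite step the process terminates early and one simply takes $S=l_n$.)

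The argument is essentially routine — there is no real obstacle. The only choice that requires any thought is the geometric schedule $r_n = 2A F(l_0)^\alpha\, 2^{-n\alpha}$, which is dictated by balancing the exponent $1+\alpha$ in the hypothesis against the desired halving of $F(l_n)$ while keeping $r_n \le 1$ and $\sum r_n < \infty$.
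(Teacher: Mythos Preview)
Your argument is correct and is precisely the standard De Giorgi-type iteration that the paper has in mind; in fact the paper does not give its own proof but simply refers to \cite{EGZ}, where exactly this scheme is carried out. One cosmetic remark: since $2A\,F(l_0)^\alpha \le 1$ by the choice of $l_0$, your explicit $S$ satisfies $S \le l_0 + (1-2^{-\alpha})^{-1}$, which makes the dependence $S=S(l_0,\alpha)$ especially transparent.
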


\begin{proof}[Proof of Proposition \ref{C0}.]

Define for each fixed $l$ large, $$F(l):=Cap_{\theta_s}(U(l),\cU) ^{1/n}.$$
By Lemma \ref{C-M} and Lemma \ref{M-C} applied to the function $\varphi_s$, we have
$$r F(l+r)\le A F(l)^2,\quad \textnormal{ for ~ all} ~ r \in [0,1], \, l>2,$$
for some uniform constant $A>0$ independent of $r\in (0,1]$. 
Proposition \ref{decay} implies that $\lim_{l\to \infty} F(l) = 0$ and the $l_0$ in Lemma \ref{lemma 3.4} can be taken as less than $(2 A C)^q$, which is a uniform constant. It follows from Lemma \ref{lemma 3.4} that $F(l) = 0$ for all $l>S$, where $S \le 2 + l_0$. On the other hand, 
if $Cap_{\omega} \{\varphi_s<-l \} = 0$, by Lemma \ref{C-M} , we have the integral $b(l)=0$. Hence the set $\{\varphi_s <-l\} = \emptyset$. Thus $\inf_\cU (\varphi_s )\ge - S$. Thus we finish the proof of Proposition \ref{C0}.
\end{proof}

\textbf{Step 2:} In this step, with the uniform $C^0$ estimate of $\varphi_s$ in hand, from Lemma \ref{BC1} to Lemma \ref{differ},  we establish high order estimates for $\varphi_s$. These lemmas are taken from \cite{DFS}, which in turn is based on many important boundary and global estimates for Monge-Amp\`ere equation from \cite{Blo,CKNS,PSS1,PS,PS1}. 

We first fix notations. We will denote the covariant derivative of $\theta$ by $\nabla$. We also let $D$ be the effective divisor from Lemma \ref{lem:kodaira}, such that $\mathrm{Supp}(D) = \mathrm{Supp}(F)\cup \mathrm{Supp}(E)$ and 
\begin{equation}\label{eq:kodaira}
\omega+\ddbar\log h_D > \be\theta,
\end{equation} for some $\be>0$. Note that by the support condition, if $\sigma_D$ is a defining section of $D$, then there exists a uniform constant $C$ such that 
\begin{equation}\label{eq:estforsections}
|\sigma_F|^{-2},|\sigma_E|_{h_E}^{-2}\leq C|\sigma_D|^{-2l},
\end{equation}  for some $l\in \mathbb{N}$.

Next, from Lemma \ref{BC1} to Lemma \ref{2ndem}, we obtain global $C^1$ and $C^2$ estimates of $\varphi_s$ with suitable barrier functions.
\begin{lemma}\cite[Lemma 2.3]{DFS}\label{BC1} There exists a constant $C>0$ such that for all $0<s<1$,   $$|\na\varphi_{ s}|_{\partial\cU}\le C.$$ 
\end{lemma}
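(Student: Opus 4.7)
The plan is to reduce the estimate to the classical boundary gradient estimate for a non-degenerate Dirichlet problem on a strongly pseudoconvex domain. The crucial observation is that every component of $E\cup F$ lies over the interior singular point $p$, so $\partial\cU$ is disjoint from the exceptional locus. Fix a one-sided collar $V$ of $\partial\cU$ in $\pi^{-1}(\cU)$ with $\overline V\cap(E\cup F)=\emptyset$. On $V$ both $|\sigma_E|_{h_E}$ and $|\sigma_F|_{h_F}$ are pinched between two positive constants, so the density $f_s:=(|\sigma_E|^2_{h_E}+s)(|\sigma_F|^2_{h_F}+s)^{-1}\Omega_Y/\theta^n$ is bounded above and below by positive constants uniformly in $s$. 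Moreover, provided $A$ in \eqref{omega} is large, $\omega=A\ddbar\rho+\ddbar\psi_1$ is strictly positive on $V$, and hence $\theta_s:=\omega+s\theta$ is uniformly K\"ahler on $V$.

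Since $\varphi_s\equiv 0$ on $\partial\cU$, tangential derivatives vanish there, so only the inward normal derivative $\partial_\nu\varphi_s$ needs estimating. For the easy direction, set $\underline u:=N(\rho-a)$ for $N>0$ large. Then $\ddbar\underline u=N\ddbar\rho>0$ and $(\theta_s+\ddbar\underline u)^n\ge N^n(\ddbar\rho)^n$, which for $N$ large enough, but uniform in $s$, dominates $f_s\,\theta^n$ on $V$ by the uniform upper bound on $f_s$. Enlarging $N$ further, one can ensure $\underline u\le -\sup_\cU|\varphi_s|$ on the inner edge $\Gamma$ of $V$ thanks to the $s$-uniform $C^0$ bound from Proposition \ref{C0}. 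The comparison principle on $V$ then gives $\underline u\le\varphi_s$ throughout $V$, and taking inward normal derivatives at $\partial\cU$ yields $\partial_\nu\varphi_s\ge\partial_\nu\underline u\ge -C$, with $C$ independent of $s$.

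The opposite bound $\partial_\nu\varphi_s\le C$ is the harder direction and requires an upper barrier $\overline u$ on $V$ satisfying $(\theta_s+\ddbar\overline u)^n\le f_s\,\theta^n$ with $\overline u|_{\partial\cU}=0$ and $\overline u\ge\varphi_s$ on $\Gamma$. Following the classical construction of Caffarelli-Kohn-Nirenberg-Spruck \cite{CKNS} (see also \cite{Guan}) adapted to the present setting, such a barrier can be built as a superposition of the defining function $\rho-a$ with an auxiliary strictly plurisubharmonic piece, the coefficients being tuned using the uniform positivity of $\theta_s$ on $V$, the uniform lower bound of $f_s$ on $V$, and the strong pseudoconvexity of $\partial\cU$. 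Comparison then yields $\overline u\ge\varphi_s$ on $V$ and hence $\partial_\nu\varphi_s\le\partial_\nu\overline u\le C$. The main obstacle is verifying that the slopes of both barriers at $\partial\cU$ are uniform in $s$; this is the case precisely because every quantity entering their construction---the $C^0$-norm of $\varphi_s$, the two-sided bounds on $f_s$ and $\theta_s$ on $V$, and the fixed geometry of $\partial\cU$---is $s$-uniform.
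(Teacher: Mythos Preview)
Your argument is essentially correct and follows the standard barrier approach that the cited reference \cite[Lemma~2.3]{DFS} (which the paper invokes without reproducing the proof) uses: one works in a collar of $\partial\cU$ disjoint from the exceptional locus, where the right-hand side and the reference form $\theta_s=\omega+s\theta$ are uniformly nondegenerate, and one sandwiches $\varphi_s$ between two barriers whose normal slopes at $\partial\cU$ are $s$-independent. Your use of Proposition~\ref{C0} to pin down the barriers on the inner edge $\Gamma$ is also the right move.

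One point is slightly off, however. You call the upper bound $\partial_\nu\varphi_s\le C$ the ``harder direction'' and propose to build a $\theta_s$-plurisubharmonic supersolution of the nonlinear inequality $(\theta_s+\ddbar\overline u)^n\le f_s\,\theta^n$. In fact this direction is the easier one and does not require any nonlinear construction: from $\theta_s+\ddbar\varphi_s>0$ one has $\Delta_\theta\varphi_s\ge -\mathrm{tr}_\theta\theta_s\ge -C_1$ uniformly on $V$, so $\varphi_s$ is a subsolution of a \emph{linear} elliptic equation. Comparing with the solution $h$ of $\Delta_\theta h=-C_1$ on $V$ with $h=0$ on $\partial\cU$ and $h=\sup_\cU|\varphi_s|$ on $\Gamma$ gives $\varphi_s\le h$ and hence $\partial_\nu\varphi_s\le\partial_\nu h\le C$. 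This is the route taken in \cite{CKNS} and \cite{DFS}. Your lower-barrier argument with $\underline u=N(\rho-a)$ is the genuinely nonlinear (and standard) step, and it is written correctly.
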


\begin{lemma}\cite[Prop 2.2]{DFS}\label{GC1}
Let $\varphi_{s}$ be the solution of equation  \eqref{scmp11}, There exist $N,C>0$ such that for all $0<s<1$,   $$|\na\varphi_{ s}|^2|\sigma_D|^{2N}_{h_D}\le C.$$  
\end{lemma}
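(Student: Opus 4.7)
The plan is to bound the quantity
\[
H := \log\bigl(|\nabla\varphi_s|^2_\theta\bigr) + N\log|\sigma_D|^2_{h_D} - A\varphi_s
\]
from above on $\overline{\pi^{-1}\cU}$, where $N, A>0$ will be chosen large but independent of $s$. Since $H\to-\infty$ along $D$, its supremum on $\overline{\pi^{-1}\cU}$ is attained either on the boundary $\partial\cU$ or at an interior point $x_0\in \pi^{-1}\cU\setminus D$. On $\partial\cU$ the bound is immediate: Lemma \ref{BC1} controls $|\nabla\varphi_s|$, Proposition \ref{C0} controls $\varphi_s$, and $|\sigma_D|^2_{h_D}$ is positive and bounded there because $\partial\cU$ is disjoint from the exceptional locus.

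In the interior case, set $\omegapsis := \omega+s\theta+\ddbar\varphi_s$ and let $\Delta'_s := g_{\varphi,s}^{i\bar j}\partial_i\partial_{\bar j}$ denote the associated linearized Monge-Amp\`ere operator. A standard Yau-type computation (cf.\ \cite{Blo,PSS1}) gives
\[
\Delta'_s \log|\nabla\varphi_s|^2_\theta \;\ge\; -C_1 \operatorname{tr}_{\omegapsis}(\theta) - C_1 + (\text{nonnegative second-order term}),
\]
with $C_1$ depending only on a uniform lower bound for the bisectional curvature of $\theta$; this is available because $\theta$ extends to a K\"ahler form on a compactification $\bar Y$ (cf.\ Remark \ref{E}). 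The Kodaira inequality \eqref{eq:kodaira} and the identity $\Delta'_s\varphi_s = n - \operatorname{tr}_{\omegapsis}(\omega+s\theta)$ yield
\[
\begin{aligned}
N\Delta'_s\log|\sigma_D|^2_{h_D} &\;\ge\; N\be\operatorname{tr}_{\omegapsis}(\theta) - N\operatorname{tr}_{\omegapsis}(\omega),\\
-A\Delta'_s\varphi_s &\;=\; -An + A\operatorname{tr}_{\omegapsis}(\omega) + sA\operatorname{tr}_{\omegapsis}(\theta).
\end{aligned}
\]
Using $\Delta'_s H(x_0)\le 0$ and choosing first $N$ with $N\be > C_1+1$ and then $A > N+1$, the coefficients of $\operatorname{tr}_{\omegapsis}(\theta)$ and $\operatorname{tr}_{\omegapsis}(\omega)$ both become uniformly positive, so that both traces are uniformly bounded at $x_0$.

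To upgrade the trace bound to a bound on $|\nabla\varphi_s|^2_\theta(x_0)$ itself, the critical-point condition $\nabla H = 0$ gives $\nabla\log|\nabla\varphi_s|^2_\theta = A\nabla\varphi_s - N\nabla\log|\sigma_D|^2_{h_D}$; substituting this into the cross terms of the Yau computation (which were dropped above as nonnegative) together with the bound on $\operatorname{tr}_{\omegapsis}(\theta)(x_0)$ produces a uniform upper bound on $|\nabla\varphi_s|^2_\theta(x_0)$. This forces $H(x_0)\le C$, hence $H\le C$ on $\overline{\pi^{-1}\cU}$, proving the claim. The main obstacle is the bookkeeping in the choice of constants: $N$ must be large enough that $N\be$ overcomes the bisectional-curvature constant $C_1$ from the Yau computation, and $A$ must then be large enough that the positive $A\operatorname{tr}_{\omegapsis}(\omega+s\theta)$ contribution absorbs the negative $-N\operatorname{tr}_{\omegapsis}(\omega)$ coming from the Kodaira term; all constants must be chosen uniformly in $s$, which is possible because $\be$, $C_1$, $h_D$, $\theta$, and $\omega$ all arise from objects defined on the fixed compactification $\bar Y$.
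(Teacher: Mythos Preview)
The paper does not prove this lemma itself; it is stated with a bare citation to \cite[Prop.~2.2]{DFS}. Your strategy---a Blocki-type maximum principle applied to $H=\log|\nabla\varphi_s|^2_\theta+N\log|\sigma_D|^2_{h_D}-A\varphi_s$, handling the boundary via Lemma~\ref{BC1} and the degeneracy of $\omega$ via the Kodaira inequality \eqref{eq:kodaira}---is exactly the approach of \cite{DFS}, so in outline you are on the right track.

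There is, however, a real gap in your interior computation. The inequality
\[
\Delta'_s \log|\nabla\varphi_s|^2_\theta \;\ge\; -C_1\,\mathrm{tr}_{\omegapsis}(\theta)-C_1+(\text{nonnegative term}),
\]
with $C_1$ depending \emph{only} on a lower bound for the bisectional curvature of $\theta$, is false as stated. Differentiating the Monge--Amp\`ere equation \eqref{scmp11} produces an additional term of the form $2\,\mathrm{Re}\langle\nabla F,\nabla\varphi_s\rangle_\theta/|\nabla\varphi_s|^2_\theta$, where $F=\log\bigl((|\sigma_E|^2_{h_E}+s)(|\sigma_F|^2_{h_F}+s)^{-1}\Omega_Y/\theta^n\bigr)$; indeed Blocki's estimate in \cite{Blo} depends explicitly on $\sup|\nabla F|$. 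Here $|\nabla F|_\theta$ is \emph{not} uniformly bounded in $s$: by \eqref{eq:estforsections} it blows up like $C|\sigma_D|^{-l}_{h_D}$ along $\mathrm{Supp}(D)$. This is precisely the term that the barrier $N\log|\sigma_D|^2_{h_D}$ is designed to absorb, yet you never use the barrier for that purpose. The repair is to argue that at the interior maximum $x_0$ one may assume $|\sigma_D|^{2N}_{h_D}|\nabla\varphi_s|^2_\theta(x_0)\ge 1$ (else done), so that $|\nabla F|_\theta/|\nabla\varphi_s|_\theta\le C|\sigma_D|^{N-l}_{h_D}$ is bounded once $N\ge l$; thus $N$ must be chosen with both $N\ge l$ and $N\be>C_1+1$. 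Your final ``upgrade'' paragraph is also too sketchy: a bound on $\mathrm{tr}_{\omegapsis}(\theta)(x_0)$ does not by itself bound $|\nabla\varphi_s|^2_\theta(x_0)$, and you should spell out which third-order terms you are using and how the first-order condition $\nabla H(x_0)=0$ is fed back into them to close the estimate.
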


\begin{lemma}\cite[Lemma 2.4]{DFS}\label{BC2}
Let $\varphi_{ s}$ be the solution of equation  \eqref{scmp11}, then there exist constant $C>0$ such that for all $0<s<1$,    $$|\na_\theta^2\varphi_{ s}|_{\partial\cU}\le C.$$
\end{lemma}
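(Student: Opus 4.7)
The plan is to reduce the estimate to the classical boundary second order estimate for the complex Monge-Amp\`ere equation on a strictly pseudoconvex domain, in the spirit of Caffarelli-Kohn-Nirenberg-Spruck \cite{CKNS}, and then verify that all constants can be chosen independently of $s\in (0,1)$. The crucial structural observation is that near $\partial\cU$ the problem is uniformly nondegenerate: the exceptional divisors $E$ and $F$ of $\pi$ sit over the isolated singularity $p$, which lies in the interior of $\cU$, so on a collar neighborhood $\cV$ of $\pi^{-1}(\partial\cU)$ the quantities $|\sigma_E|^2_{h_E}$ and $|\sigma_F|^2_{h_F}$ are smooth and bounded away from $0$. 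Hence the right hand side of \eqref{scmp11} satisfies $c\le F_s\le C$ on $\cV$ uniformly in $s$, and $\omega+s\theta\ge c_0\theta$ there because $\ddbar\rho>0$ on $\mathbb{C}^n$.

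Fixing a point $q\in\partial\cU$ and working in real coordinates adapted to the boundary, I would split the Hessian bound into the three standard pieces. The tangential-tangential estimate follows immediately from the Dirichlet condition $\varphi_s\equiv 0$ on $\partial\cU$ together with the boundary $C^1$ bound of Lemma \ref{BC1}, since two tangential derivatives of a function vanishing on the boundary only pick up the second fundamental form paired with $\na\varphi_s$. The normal-normal estimate can be read off algebraically from the Monge-Amp\`ere equation $\det(g_s)=F_s$ once the tangential-tangential and tangential-normal components of the complex Hessian are known, using the uniform two-sided bound on $F_s$ and the uniform positivity of $\omega+s\theta$ near $\partial\cU$.

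The main obstacle, as usual, is the mixed tangential-normal bound. Here I would apply the linearized operator $L=g_s^{i\bar j}\partial_i\dbr{j}$ to a barrier of the form $\pm T\varphi_s+c(\rho-a)-N(\rho-a)^2+B|z-q|^2$, where $T$ is a tangential vector field with constant coefficients; differentiating the equation $\log\det(g_s)=\log F_s$ along $T$ expresses $L(T\varphi_s)$ in terms of tangential derivatives of $\log F_s$ and curvature terms that are uniformly bounded on $\cV$ by the nondegeneracy above. The strong pseudoconvexity of $\partial\cU$, together with the uniform strict positivity of $\omega+s\theta$ near $\partial\cU$, allows one to choose $c, N, B$ so that $Lh\le 0$ in a fixed half-ball and $h\ge 0$ on its parabolic boundary, with all constants independent of $s$. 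The maximum principle then controls $T\varphi_s$ in the interior of this half-ball, and combined with $T\varphi_s|_{\partial\cU}=0$ yields the mixed estimate. Since this is precisely the argument of \cite[Lemma 2.4]{DFS} applied in our setting, the only real verification is that the relevant quantities at $\partial\cU$ are $s$-uniform, which follows from the observation that $\pi^{-1}(\partial\cU)$ is disjoint from the singular locus of the right hand side.
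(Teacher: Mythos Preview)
The paper does not supply its own proof of this lemma; it is stated with a bare citation to \cite[Lemma 2.4]{DFS}, and the surrounding text simply remarks that Lemmas \ref{BC1}--\ref{differ} are taken from \cite{DFS}, which in turn builds on \cite{Blo,CKNS,PSS1,PS,PS1}. So there is nothing in the paper to compare your argument against beyond the reference itself.

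That said, your outline is the standard CKNS boundary $C^2$ scheme and is the correct approach. Your key structural observation---that $\pi^{-1}(\partial\cU)$ is disjoint from $\mathrm{Supp}(E)\cup\mathrm{Supp}(F)$, so that on a collar $\cV$ the right hand side of \eqref{scmp11} is uniformly bounded above and below and $\omega+s\theta\ge c_0\theta$ uniformly in $s$---is exactly what makes the boundary estimate $s$-independent here, and it is the point the paper leaves implicit by citing \cite{DFS}. The tangential-tangential, tangential-normal (barrier), and normal-normal (algebraic from $\det$) decomposition you describe is precisely the CKNS argument; the only minor caveat is that for the tangential-normal barrier one should also invoke the interior gradient bound (Lemma \ref{GC1}), which is harmless on $\cV$ since $|\sigma_D|_{h_D}$ is bounded below there, to control $|T\varphi_s|$ on the inner boundary of the half-ball.
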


Next, we show second order estimates with bounds from suitable barrier functions.  We include it for reader's convenience and also for later purpose. (See also \cite{ST2}) %There exists an effective Cartier divisor $D$ on $Y$ such that for any sufficiently small $s>0$,

\begin{lemma}\label{2ndem} There exist constants $N,C>0$ such that for all $0<s<1$,  
\begin{equation}
\sup_{\cU} \left(  |\sigma|^{N}_{h_D} \right)  |\Delta_\theta \varphi_{ s} | \leq C,
\end{equation}
where $\Delta_\theta$ is the Laplace operator with respect to the K\"ahler metric $\theta$.
\end{lemma}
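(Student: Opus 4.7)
The plan is to adapt Yau's second-order maximum-principle argument, inserting a singular barrier supplied by Kodaira's Lemma \ref{lem:kodaira} to absorb the singular behavior of the Monge-Amp\`ere right-hand side near the exceptional divisor. Concretely, I would test the quantity
\begin{equation*}
H := \log\mathrm{tr}_\theta \omega_{\varphi_s} - A\varphi_s + N\log|\sigma_D|^2_{h_D}, \qquad \omega_{\varphi_s} := \omega + s\theta + \ddbar\varphi_s,
\end{equation*}
where $A, N > 0$ are constants to be chosen. First I would check that $H$ attains its supremum at an interior point of $\cU$ lying off $\mathrm{Supp}(D)$: the term $N\log|\sigma_D|^2_{h_D}$ drives $H\to -\infty$ along $D$, while on $\partial\cU$ the divisor $D$ (supported on the exceptional locus of $\pi$) stays a positive distance away, so $|\sigma_D|_{h_D}$ is bounded below there, and the boundary Laplacian estimate of Lemma \ref{BC2} yields an $s$-independent upper bound for $H|_{\partial\cU}$.

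Next I would evaluate $\Delta_{\varphi_s} H$ at an interior maximum $p_0$ of $H$ using three standard ingredients: the Yau Laplacian inequality
\begin{equation*}
\Delta_{\varphi_s}\log\mathrm{tr}_\theta \omega_{\varphi_s} \geq -C_0 \mathrm{tr}_{\omega_{\varphi_s}}\theta + \frac{\Delta_\theta\log(\omega_{\varphi_s}^n/\theta^n)}{\mathrm{tr}_\theta \omega_{\varphi_s}},
\end{equation*}
whose negative contribution is controlled by $-C|\sigma_D|^{-2l}/\mathrm{tr}_\theta\omega_{\varphi_s}$ via \eqref{scmp11} and \eqref{eq:estforsections}; the identity $\Delta_{\varphi_s}(-A\varphi_s) = -An + A\mathrm{tr}_{\omega_{\varphi_s}}(\omega + s\theta)$; and the Kodaira bound $\omega + \ddbar\log|\sigma_D|^2_{h_D} \geq \beta\theta$ away from $D$, which gives
\begin{equation*}
\Delta_{\varphi_s}(N\log|\sigma_D|^2_{h_D}) \geq N\beta\mathrm{tr}_{\omega_{\varphi_s}}\theta - N\mathrm{tr}_{\omega_{\varphi_s}}\omega.
\end{equation*}

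Choosing $N$ so that $N\beta - C_0 \geq 1$ and $A \geq N$, the inequality $0 \geq \Delta_{\varphi_s} H(p_0)$ collapses to
\begin{equation*}
\mathrm{tr}_{\omega_{\varphi_s}}\theta(p_0) \leq An + C|\sigma_D|^{-2l}(p_0)/\mathrm{tr}_\theta \omega_{\varphi_s}(p_0).
\end{equation*}
Pairing this with the elementary bound $\mathrm{tr}_\theta \omega_{\varphi_s} \leq C(\mathrm{tr}_{\omega_{\varphi_s}}\theta)^{n-1}(\omega_{\varphi_s}^n/\theta^n)$ and the uniform estimate $\omega_{\varphi_s}^n/\theta^n \leq C|\sigma_D|^{-2l}$ (from \eqref{scmp11} and \eqref{eq:estforsections}), one extracts $\mathrm{tr}_\theta \omega_{\varphi_s}(p_0) \leq C|\sigma_D|^{-2N'}(p_0)$ for some $N'$. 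Feeding this back into the definition of $H$, comparing $H$ at an arbitrary point to $H(p_0)$, and invoking the uniform $C^0$ bound on $\varphi_s$ from Proposition \ref{C0} then yields $\sup_{\cU}|\sigma_D|^N_{h_D}|\Delta_\theta \varphi_s| \leq C$ after possibly enlarging $N$, which is the claim. The main technical obstacle is the careful bookkeeping of the exponents of $|\sigma_D|_{h_D}$ appearing through the two singular inequalities above so that a single exponent $N$ works uniformly in $s$; this is routine but tedious, as is typical in Kodaira-lemma based second-order estimates on singular K\"ahler spaces.
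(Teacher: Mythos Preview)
Your proposal is correct and follows essentially the same route as the paper: both test $H=\log\mathrm{tr}_\theta\omega_{\varphi_s}-A\varphi_s+N\log|\sigma_D|^2_{h_D}$, use Lemma~\ref{BC2} to rule out a boundary maximum, apply the Yau--Aubin inequality together with the Kodaira bound \eqref{eq:kodaira} to absorb the bad curvature term, and finish by the elementary relation between $\mathrm{tr}_\theta\omega_{\varphi_s}$, $\mathrm{tr}_{\omega_{\varphi_s}}\theta$ and the volume ratio. The only cosmetic difference is that the paper takes $A=N=B$ from the outset, whereas you keep $A$ and $N$ separate and impose $A\ge N$; this changes nothing of substance.
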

\begin{proof} 

Let $\omega' =  \omega + s\theta + \ddbar \varphi_{s}$. Then we consider the quantity $$H= \log tr_\theta (\omega') - B\varphi_{ s} +B \log |\sigma|^2_{h_D},$$
for some large constant $B>2$. By the $C^0$ estimate of $\varphi_{ s}$, $H$ is bounded from above in $\cU$. Standard calculations show that 
\begin{equation}\label{eq:aubin-yau-c2}
\Delta ' \log tr_{\theta}{\omega'} \geq -Ctr_{\omega'}{\theta} - \frac{tr_{\theta}Ric(\omega')}{tr_{\theta}(\omega')}. 
\end{equation} From equations \eqref{scmp11}, \eqref{eq:estforsections} and the elementary observation that $$\ddbar\log(f+s) = \frac{f}{f+s}\ddbar\log f + s\frac{\sqrt{-1}\partial f\wedge \overline{\partial} f}{f(f+s)}$$ holds for any smooth non-negative function $f$, it is easy to see that $$-tr_{\theta}Ric(\omega') \geq \frac{-C}{|\sigma_D|_{h_D}^{2l}},$$ for some constant $C>0$ independent of $s$.  Together with \eqref{eq:aubin-yau-c2} and our choice of $D$ (cf.\ \eqref{eq:kodaira})  we see that 
\begin{align*}
\Delta'H &\geq -Ctr_{\omega'}{\theta}  - \frac{C}{|\sigma_D|_{h_D}^{2l}tr_{\theta}(\omega')} + Btr_{\omega'}(\ddbar \log h_D - \ddbar\varphi_s)\\
&\geq (B\be - C)tr_{\omega'}{\theta} - \frac{C}{|\sigma_D|_{h_D}^{2l}tr_{\theta}(\omega')} - Bn\\
&\geq tr_{\omega'}{\theta} - \frac{C}{|\sigma_D|_{h_D}^{2l}tr_{\theta}(\omega')} - Bn \hspace{0.5in} (\text{if } B>>1)\\ 
&\geq (tr_\theta (\omega'))^{\frac{1}{n-1}}(\frac{\theta^n}{\omega'^n})^{\frac{1}{n-1}} - \frac{C}{|\sigma_D|_{h_D}^{2l}tr_{\theta}(\omega')} - Bn\\
&\geq  (tr_\theta (\omega'))^{\frac{1}{n-1}}|\sigma_D|_{h_D}^{2\al} - \frac{C}{|\sigma_D|_{h_D}^{2l}tr_{\theta}(\omega')} - Bn,
\end{align*}
for some constant $\al$ independent of $s$ and $B>>1$ so that $B\be > C+1$ in line three. 

% if $B$ is large enough, then there exists $C>0$ such that for all $0<s<1$, 
%
%\begin{eqnarray*}
%
%\Delta_{\omega'} H &\geq & \Delta_{\omega'}\log tr_{\theta}(\omega')+2B^2 tr_{\omega'}(\omega' + \ddbar\log h_D)-nB^3  \\
%&\geq & Btr_{\omega'}\theta-C\frac{tr_{\theta}Ric(\omega')}{tr_{\theta}(\omega')}-B^3\\
%
%& \geq&  B(tr_\theta (\omega'))^{\frac{1}{n-1}}(\frac{\theta^n}{\omega'^n})^{\frac{1}{n-1}} - C\frac{1}{tr_\theta\omega' |\sigma_D|^{2\alpha}_{h_D}} -B^3\\
%& \geq&  B(tr_\theta (\omega'))^{\frac{1}{n-1}}|\sigma_D|_{D}^{2\beta} - C\frac{1}{tr_\theta\omega' |\sigma_D|^{2\alpha}_{h_D}} -B^3\\
%
%\end{eqnarray*}
%
%where $\alpha,\beta$ are fixed constants only depending on the coefficient of exceptional divisor in the log resolution of singularity. For the third and fourth inequality,  we use the equation. Noticing by the $C^0$ estimaste of  $\varphi_{ s}$ that for some fixed $\epsilon_0$ to be determined,  $$H<\log(tr_{\theta}(\omega')|\sigma_{D}|^{B^2-B^3\epsilon_0}_{h_D})+C_{\epsilon_0}$$
%
By Lemma \ref{BC2}, it suffices to assume that $ H$ obtains maximum at a point $p\in \cU$. Moreover, since $H$ goes to $-\infty$ on $\mathrm{Supp}(D)$, clearly $p\notin \mathrm{Supp}(D)$.  From the maximum principle it follows that there is an integer $k$ (depending on $\al,l$ and $n$) such that $$|\sigma_D|_{h_D}^{2k}(tr_\theta\omega')(p)\leq C.$$ It then follows easily (possibly by choosing a $B>>k$) that for any $x\in \cU\setminus\mathrm{Supp}(D)$ we have
\begin{align*}
|\sigma_D|^{2N}_{h_D}(tr_{\theta}\omega')(x) \leq C,
\end{align*} for some $N$ large enough. This gives an upper bound on $|\sigma_D|^{2N}_{h_D}|\Delta_\theta\varphi_s|$. On the other hand we also have the trivial lower bound that $\Delta_\theta\varphi_s > -tr_\theta\omega - sn$. This completes the proof of the Lemma.   %From the equation,  we also have that $|\sigma_D|^N_{h_D}(tr_{\omega'}\theta)(x) \leq C,$ possibly by choosing an even larger $N$. This completes the proof of the Lemma. 
%Also, since our goal is to bound $H$, we may assume that at the maximal point of H, $$(tr_{\theta}(\omega')|\sigma_D|_{h_D}^{2\alpha})>1.$$ Applying the maximum principle, at the maximal point $x_{max}$ of $H$, (by choosing $A> 2\beta(n-1)$)
%
%$$H(x_{max})<tr_\theta(\omega')|\sigma_D|_{h_D}^{A^2}+C  \leq n A^2. $$ 
%
%Hence $\max H\le \max\{C_{\epsilon_0},nA^2\}$. On the other hand, since $\varphi_{q,s,\delta,\epsilon}\leq C$,
%we have $$tr_\theta{\omega'}\le \frac{1}{|\sigma_{D}|_{h_D}^{2A^2}}$$
%This proves the lemma. 

\end{proof}

The following lemma on local higher order regularity of $\varphi_{ s}$ is established by the standard linear elliptic theory after applying Lemma \ref{2ndem} and linearizing the complex Monge-Amp\`ere equation \eqref{scmp11}. We state it without proof.

\begin{lemma} \label{lem:uniform-smooth-estimates}
For  any compact  $K \subset\subset (\cU \setminus p) $, and any natural number $k\in \mathbb{N}$, there exists a constant $C=C( k, K)>0$ such that for any $0<s<1$ 
$$ || \varphi_{ s} ||_{C^k(K)} \leq C. $$

\end{lemma}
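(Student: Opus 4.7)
The plan is to localize the equation \eqref{scmp11} to any compact $K \subset\subset \cU\setminus\{p\}$ and apply standard interior regularity theory for a uniformly elliptic complex Monge-Amp\`ere equation. The key geometric observation is that the support of the divisor $D$ (and hence of $E$ and $F$) lies in $\pi^{-1}(p)$, and $\pi$ is a biholomorphism off this locus. Therefore, when $K$ is regarded as a subset of $\pi^{-1}(\cU)$, the norms $|\sigma_D|_{h_D}$, $|\sigma_E|_{h_E}$ and $|\sigma_F|_{h_F}$ are all bounded above and below by positive constants depending only on $K$. In particular, the right-hand side $(|\sigma_E|^2_{h_E}+s)(|\sigma_F|^2_{h_F}+s)^{-1}\Omega_Y$ is $C^{\infty}(K)$ with $s$-uniform bounds and a uniform positive lower bound, and Lemma \ref{2ndem} specializes to the genuine $C^{2}$ bound $|\Delta_{\theta}\varphi_s| \leq C(K)$ uniformly in $s \in (0,1)$.

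Combining this with the $C^{0}$ bound of Proposition \ref{C0}, the upper bound on the trace together with the uniform positive lower bound on the determinant $\det_{\theta}(\omega+s\theta+\ddbar\varphi_s)$ (provided by the RHS) confines all eigenvalues of $\omega+s\theta+\ddbar\varphi_s$ with respect to $\theta$ to a fixed compact subinterval of $(0,\infty)$ on $K$. Hence the equation is uniformly elliptic on $K$ with $s$-uniformly smooth data. The complex Evans--Krylov theorem (see e.g.\ Siu's lectures, or Blocki) then yields, for every $K' \subset\subset \mathrm{int}(K)$ and some $\alpha \in (0,1)$ independent of $s$, a uniform interior estimate
\begin{equation*}
\|\varphi_s\|_{C^{2,\alpha}(K')} \leq C(K',K).
\end{equation*}

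Higher-order regularity follows by the standard bootstrap. In local coordinates the Monge-Amp\`ere equation takes the form $\log\det(g_{s,i\bar j}) = \log F_s$; differentiating in a coordinate direction $\partial_k$ gives the linear equation $g_s^{i\bar j}(\partial_k\varphi_s)_{i\bar j} = h_s$, whose coefficients and right-hand side lie in $C^{\alpha}$ uniformly in $s$. Interior Schauder estimates upgrade $\partial_k\varphi_s$ to $C^{2,\alpha}$ on slightly smaller sets, i.e.\ $\varphi_s \in C^{3,\alpha}$ with $s$-uniform bounds, and iterating on a nested exhaustion of $K$ produces $s$-uniform $C^{k,\alpha}$ bounds for every $k$.

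The only point that requires attention is the $s$-uniformity of the Evans--Krylov and Schauder constants throughout the iteration. This is not really an obstacle here: all the quantities that control these constants — namely the ellipticity modulus, the upper bound on $\Delta_\theta\varphi_s$, the $C^{\infty}$ bounds on the right-hand side, and the lower bound on $|\sigma_D|_{h_D}$ — are uniform in $s\in(0,1)$ on $K$, precisely because $K$ is kept a definite distance from the singular point $p$ and from the exceptional locus.
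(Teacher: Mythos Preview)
Your argument is correct and matches the paper's own approach: the paper states this lemma without proof, remarking only that it ``is established by the standard linear elliptic theory after applying Lemma~\ref{2ndem} and linearizing the complex Monge-Amp\`ere equation~\eqref{scmp11}.'' Your proposal makes exactly this standard argument explicit---use that $\mathrm{Supp}(D)\subset\pi^{-1}(p)$ to turn the weighted $C^2$ bound of Lemma~\ref{2ndem} into a genuine uniform $C^2$ bound on $K$, obtain uniform ellipticity from the two-sided eigenvalue control, apply Evans--Krylov for $C^{2,\alpha}$, and then bootstrap via Schauder---so there is nothing to add.
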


Before we let $s \rightarrow 0$, we obtain a uniform bounds on the `$s$' derivative of $\varphi_s$.

\begin{lemma}\cite[Lemma 2.7]{DFS} \label{differ}
For  any compact  $K \subset\subset (\cU \setminus \mathrm{Supp}(D)) $ , there exists a constant $C=C( K)>0$ such that for any $0<s<1$, we have
\begin{equation}\label{1derest}
\sup_{K} \left|  \frac{\partial \varphi_{ s}}{\partial s} \right| \leq C. 
 \end{equation}
 
\end{lemma}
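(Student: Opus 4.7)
The plan is to derive a linear elliptic equation for $u_s := \partial \varphi_s/\partial s$ by differentiating \eqref{scmp11} in $s$, and then apply a global maximum principle argument on $\mathcal U$ using a barrier built from $\log|\sigma_D|^2_{h_D}$.

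Differentiability of the family $\{\varphi_s\}$ in the parameter $s$ (in the $C^k_{loc}(\cU \setminus \mathrm{Supp}(D))$ topology) follows from the implicit function theorem applied to the Monge-Amp\`ere operator, using the uniform smooth estimates of Lemma \ref{lem:uniform-smooth-estimates} and invertibility of its linearization. Writing $\omega'_s := \omega + s\theta + \ddbar\varphi_s$, taking logarithms on both sides of \eqref{scmp11} and differentiating at an interior point yields
\begin{equation*}
\Delta_{\omega'_s} u_s \;=\; \frac{1}{|\sigma_E|^2_{h_E}+s}-\frac{1}{|\sigma_F|^2_{h_F}+s}-\mathrm{tr}_{\omega'_s}\theta,
\end{equation*}
with Dirichlet data $u_s|_{\partial\cU}\equiv 0$ inherited from the $s$-independent boundary condition. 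By \eqref{eq:estforsections} the term $|\sigma_F|^{-2}_{h_F}$ is bounded by $C|\sigma_D|^{-2l}_{h_D}$, and by Lemma \ref{2ndem} combined with the Monge-Amp\`ere identity \eqref{scmp11}, the quantity $\mathrm{tr}_{\omega'_s}\theta$ is controlled by a bounded power of $|\sigma_D|^{-2}_{h_D}$; in particular both sides of the differentiated equation admit a uniform bound of the form $C|\sigma_D|^{-2m}_{h_D}$.

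Next, I would introduce the barriers
\begin{equation*}
H^{\pm}_s \;:=\; \pm u_s + A\log|\sigma_D|^2_{h_D}
\end{equation*}
for a sufficiently large constant $A>0$. Since $\log|\sigma_D|^2_{h_D}\to-\infty$ along $\mathrm{Supp}(D)$, the supremum of $H^{\pm}_s$ on $\overline{\cU}\setminus\mathrm{Supp}(D)$ is attained either on $\partial\cU$ (where $u_s=0$ and $|\sigma_D|^2_{h_D}$ is uniformly bounded away from $0$) or at an interior point $p$ disjoint from $\mathrm{Supp}(D)$. Away from $\mathrm{Supp}(D)$ we have $\ddbar\log|\sigma_D|^2_{h_D}=\ddbar\log h_D$, so \eqref{eq:kodaira} gives
\begin{equation*}
\Delta_{\omega'_s}\log|\sigma_D|^2_{h_D} \;\geq\; \beta\,\mathrm{tr}_{\omega'_s}\theta - \mathrm{tr}_{\omega'_s}\omega.
\end{equation*}
In the interior case, $\Delta_{\omega'_s}H^{\pm}_s(p)\leq 0$ combined with the formula for $\Delta_{\omega'_s}u_s$ produces, at $p$,
\begin{equation*}
(A\beta-1)\,\mathrm{tr}_{\omega'_s}\theta \;\leq\; A\,\mathrm{tr}_{\omega'_s}\omega + \frac{1}{|\sigma_F|^2_{h_F}+s}.
\end{equation*}
For $A>1/\beta$ the left side absorbs $\mathrm{tr}_{\omega'_s}\theta$; the remaining terms on the right are estimated through \eqref{eq:estforsections}, the Monge-Amp\`ere equation \eqref{scmp11}, and AM--GM, all in terms of a bounded power of $|\sigma_D|^{-2}_{h_D}$ which is then reabsorbed by further enlarging $A$. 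This yields a uniform upper bound for $H^{\pm}_s(p)$, hence globally $H^{\pm}_s\leq C$ on $\cU$. Restricting to any compact $K\subset\cU\setminus\mathrm{Supp}(D)$, where $\log|\sigma_D|^2_{h_D}$ is uniformly bounded, gives $|u_s|\leq C(K)$, as claimed.

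The main obstacle is closing the maximum principle argument in the third step: the singular contribution $1/(|\sigma_F|^2_{h_F}+s)$ from the linearized equation and the term $A\,\mathrm{tr}_{\omega'_s}\omega$ from the barrier must both be dominated by $A\beta\,\mathrm{tr}_{\omega'_s}\theta$ at the interior maximum. Balancing $A$ against the a priori growth rate of $\mathrm{tr}_{\omega'_s}\theta$ and $\mathrm{tr}_{\omega'_s}\omega$ near $\mathrm{Supp}(D)$---controlled by Lemma \ref{2ndem}, \eqref{eq:estforsections} and AM--GM applied to the Monge-Amp\`ere identity---is the delicate point of the argument.
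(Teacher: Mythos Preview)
The paper does not prove this lemma; it merely cites \cite[Lemma 2.7]{DFS}. Your overall strategy---differentiate \eqref{scmp11} in $s$, introduce the barrier $H^{\pm}_s=\pm u_s+A\log|\sigma_D|^2_{h_D}$, and run the maximum principle---is the standard one and matches the intended approach. The linearized equation you wrote is correct.

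There is, however, a genuine gap in the maximum principle step. At an interior maximum $p$ of $H^{+}_s$ the inequality you derive,
\[
(A\beta-1)\,\mathrm{tr}_{\omega'_s}\theta \;\leq\; A\,\mathrm{tr}_{\omega'_s}\omega + \frac{1}{|\sigma_F|^2_{h_F}+s},
\]
involves only geometric quantities at $p$; it contains neither $u_s(p)$ nor $H^{+}_s(p)$. Since the linearization of \eqref{scmp11} has \emph{no zeroth-order term} in $u_s$, there is no mechanism by which this inequality can produce a bound on $H^{+}_s(p)$, contrary to your claim. Nor can one argue that $\Delta_{\omega'_s}H^{+}_s>0$ globally to force the maximum to $\partial\cU$: since $\omega\le C_0\theta$ with (in general) $C_0\ge\beta$, the displayed inequality is vacuously true everywhere, and near $F$ the term $(|\sigma_F|^2_{h_F}+s)^{-1}$ blows up faster than anything you can extract from $\mathrm{tr}_{\omega'_s}\theta$ via AM--GM (the latter is only $\gtrsim (|\sigma_F|^2_{h_F}+s)^{1/n}$). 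Your ``reabsorption by enlarging $A$'' therefore does not close.

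The point you are missing is precisely why the reference \cite{DFS} works: there the Monge--Amp\`ere equation carries an $e^{\varphi_s}$ factor (negative K\"ahler--Einstein), so differentiation yields $\Delta_{\omega'_s}u_s - u_s = \cdots$, and the zeroth-order term lets the maximum principle bound $u_s(p)$ directly. For equation \eqref{scmp11} as written, one must either reintroduce such a term (e.g.\ via an auxiliary perturbation) or replace the barrier argument by a different device; the proof as you have sketched it does not go through.
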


A corollary of the a priori estimates established is the following generalization of Lemma \ref{blowup}. 
\begin{corollary}\label{blowup1} Let $(X,p)$  be  an isolated Klt  singularity, then there exists a solution to the equation (\ref{scmp11}) for any fixed $s>0$.
\end{corollary}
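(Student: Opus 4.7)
We proceed by a continuity method in the parameter $s$, starting from a large value of $s$ where the equation is trivially solvable and continuing down to any prescribed $s_0 > 0$. The closedness step of the continuity is supplied by the uniform a priori estimates established in Step 1 and Step 2 above.

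\textbf{Initial solvability at large $s$.} For $S \gg 1$ sufficiently large, on the compact set $\overline{\pi^{-1}(\cU)}$ we have
\[
(|\sigma_E|^2_{h_E}+S)(|\sigma_F|^2_{h_F}+S)^{-1}\Omega_Y \;\leq\; C\,\Omega_Y \;\leq\; S^n\theta^n \;\leq\; (\omega + S\theta)^n,
\]
with $C$ independent of $S$ once $S$ is large (both numerator and denominator of the coefficient are then bounded between positive multiples of $S$). Hence $\Phi \equiv 0$ is a smooth subsolution of \eqref{scmp11} at $s = S$ with the correct zero boundary value, and the classical subsolution-based Dirichlet theory invoked in the proof of Lemma \ref{blowup} produces a smooth solution at $s = S$.

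\textbf{Openness and closedness.} Fix the desired $s_0 \in (0, S)$ and define
\[
T := \{\, s \in [s_0, S] \,:\, \text{\eqref{scmp11} admits a smooth solution}\,\}.
\]
We have $S \in T$. Openness is standard: the linearization of the Monge-Amp\`ere operator at a solution $\varphi_s$ is the Laplacian of the K\"ahler form $\omega + s\theta + \ddbar\varphi_s$ with Dirichlet boundary condition, an invertible elliptic operator in the usual H\"older spaces, so the implicit function theorem gives persistence of solutions near any $s \in T$. For closedness, let $s_k \to s_\infty$ in $T$ with smooth solutions $\varphi_{s_k}$. Proposition \ref{C0} supplies a uniform $C^0$-bound, while Lemmas \ref{BC1}--\ref{lem:uniform-smooth-estimates} supply uniform higher-order bounds on compact subsets of $\cU \setminus \mathrm{Supp}(D)$. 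Because $s \geq s_0 > 0$ along the family, the equation is uniformly elliptic, and the $|\sigma_D|$-weighted estimates near the exceptional divisor can be upgraded to full $C^{k,\alpha}$-bounds on all of $\pi^{-1}(\cU)$ via standard interior regularity for nondegenerate Monge-Amp\`ere equations together with boundary Schauder theory. A subsequential limit then yields a smooth solution at $s = s_\infty$, so $s_\infty \in T$.

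\textbf{Conclusion and main obstacle.} Since $T$ is nonempty, open, and closed in the connected interval $[s_0, S]$, we conclude $T = [s_0, S]$; in particular $s_0 \in T$, proving the corollary. The chief technical point --- and the main obstacle --- is bridging the $|\sigma_D|$-weighted a priori bounds of Lemmas \ref{2ndem} and \ref{lem:uniform-smooth-estimates} (which are optimized to remain valid as $s \to 0$) with the unweighted, up-to-the-boundary $C^{k,\alpha}$ bounds required to run closedness at fixed positive $s$. This bridging is possible precisely because for $s \geq s_0 > 0$ the Monge-Amp\`ere equation is genuinely uniformly elliptic everywhere on $\pi^{-1}(\cU)$, so the degeneration of the weighted estimates near $\mathrm{Supp}(D)$ is not an essential obstruction for each fixed positive $s$.
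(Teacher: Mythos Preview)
Your approach is correct but differs from the paper's. The paper fixes $s>0$ and runs the continuity method in an auxiliary parameter $t\in[0,1]$, interpolating the \emph{right-hand side} between $(\omega+s\theta)^n$ (at $t=0$, where $\varphi_{s,0}\equiv 0$ is the explicit solution) and the target density (at $t=1$); see equation \eqref{scmp111}. You instead run the continuity in the parameter $s$ itself, starting from a large $S\gg 1$ where $\Phi\equiv 0$ is a subsolution and descending to the desired $s_0$.

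Both paths work, and both rely on Proposition \ref{C0} for the $C^0$ estimate together with the Step~2 arguments for higher-order bounds (noting that once $s$ is bounded away from zero the reference form is genuinely K\"ahler and the right-hand side is smooth and positive, so unweighted estimates are available). The paper's path is somewhat cleaner: its starting solution is explicit rather than produced via a subsolution theorem, and the reference form $\omega+s\theta$ stays fixed along the path so Proposition \ref{C0} (stated for $s<1$) applies verbatim. Your path requires Proposition \ref{C0} on the range $s\in[s_0,S]$ with $S$ possibly large; this does go through with constants depending on $S$, but it deserves a remark. Your ``upgrading'' of weighted to unweighted second-order estimates would also be more transparent if stated directly: for $s\ge s_0$ one may rerun the proof of Lemma \ref{2ndem} with $H=\log tr_\theta\omega'-B\varphi_s$ and no logarithmic barrier, since both the degeneracy of $\omega$ and the divisorial poles of the right-hand side are absent.
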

\begin{proof}
For fixed $s>0,t\in [0,1]$, consider
 \begin{equation}\label{scmp111}
\left\{
\begin{array}{rcl}
&&(\omega + s\theta + \ddbar \varphi_{s,t})^n = ((1-t)+t(g\frac{\Omega_{Y}}{(\chi+s\theta)^n}))(\chi+s\theta)^n, \\
&&\varphi_{s,t}|_{\partial\Omega}=0.
\end{array}\right.
\end{equation} 
By Proposition \ref{C0}, $\varphi_{s,t}$ is uniformly bounded independently of $t$. Then by the same argument as in \cite[Thm 1.1]{DFS} (or step 2 above where we obtained the higher order estimates), we can finish the proof by the standard continuity method.
\end{proof}
Now we are able to prove our Theorem  (\ref{main1}). 

\begin{proof}[Proof of Theorem \ref{main1}:]
Combine Proposition \ref{C0}, Lemma \ref{BC1}, Lemma \ref{GC1}, Lemma \ref{BC2}, Lemma \ref{2ndem}, Lemma \ref{lem:uniform-smooth-estimates} Lemma \ref{differ} and take $\varphi:=\lim_{s\to 0}\varphi_s$ to complete the proof.
\end{proof}

 We remark that, if we change the right hand side of equation (\ref{ocmp1}) to $e^{\varphi}\Omega_X$, a useful $C^0$ estimate without using pluripotential theory is obtained in \cite{DFS} (see also \cite{DGG,SSW}). Using this new estimate, we give a simplified proof of Theorem \ref{EGZ} without using pluripotential theory but with weaker $C^0$ estimate.

\begin{proposition}\label{b}
 Suppose $X$ is canonical polarized  variety with Klt singularity, consider 
 
 \begin{equation}\label{a} 
(\omega+ \ddbar \varphi)^n = e^{ \varphi} \Omega_X^n,
\end{equation}
where $\Omega_X$ is a volume form satisfying $\ddbar\log\Omega_X^n=\omega>0$. Then there is a solution $\varphi\in C^\infty(X_{reg})$  to the above equation on smooth locus $X$.
\end{proposition}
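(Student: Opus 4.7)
The plan is to follow the strategy of Theorem \ref{main1} with two major simplifications: $X$ is compact, so no boundary estimates enter, and the $e^{\varphi}$ factor on the right-hand side together with the structural assumption $\ddbar\log\Omega_X^n=\omega$ gives a coercive Monge-Amp\`ere equation whose $C^0$ estimate can be proved by a maximum principle / barrier argument as in \cite{DFS,DGG,SSW}, bypassing pluripotential theory entirely.

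First I would fix a log resolution $\pi:Y\to X$ with $K_Y=\pi^*K_X+\sum_i a_iE_i-\sum_j b_jF_j$, $a_i\ge 0$, $0<b_j\le 1$, equip $E,F$ with smooth Hermitian metrics $h_E,h_F$, pick a K\"ahler form $\theta$ on a projective compactification of $Y$, and take a smooth strictly positive volume form $\Omega_Y$ on $Y$ with $\Omega_Y=|\sigma_E|^{-2}_{h_E}|\sigma_F|^{2}_{h_F}\pi^*\Omega_X^n$. For each $s\in(0,1]$, consider on the compact manifold $Y$ the regularized equation
\begin{equation*}
(\pi^*\omega+s\theta+\ddbar\varphi_s)^n = e^{\varphi_s}(|\sigma_E|^{2}_{h_E}+s)(|\sigma_F|^{2}_{h_F}+s)^{-1}\Omega_Y,
\end{equation*}
whose smooth solvability is classical Aubin-Yau, since $\pi^*\omega+s\theta$ is K\"ahler for $s>0$ and the $e^{\varphi_s}$ dependence is monotone.

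The core step is a uniform $C^0$ bound for $\varphi_s$ independent of $s$. The plan is to test auxiliary functions of the form $\varphi_s\pm\ep\log|\sigma_D|^{2}_{h_D}$, with $D$ the divisor from Lemma \ref{lem:kodaira}, at their extrema, using the identity $\ddbar\log\Omega_X^n=\omega$ to linearize the logarithm of the Monge-Amp\`ere equation and the monotone $e^{\varphi_s}$ factor to close the estimate. Choosing $\ep$ smaller than the Klt discrepancies $1-b_j$ and using \eqref{eq:kodaira} ensures that the correction absorbs the $(|\sigma_F|^{2}_{h_F}+s)^{-1}$ singularity uniformly in $s$; this is exactly the content of the $C^0$ estimate of \cite{DFS}. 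With the uniform $C^0$ bound in hand, the higher order estimates Lemma \ref{GC1}, Lemma \ref{2ndem}, and Lemma \ref{lem:uniform-smooth-estimates} carry over essentially verbatim, since $e^{\varphi_s}$ is now uniformly bounded above and below. Passing $s\to 0$ along a subsequence yields a smooth limit on $\pi^{-1}(X_{reg})$, which descends to the desired $\varphi\in C^\infty(X_{reg})$ solving \eqref{a}.

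The main expected obstacle is the uniform lower $C^0$ bound. While the $e^{\varphi_s}$ coercivity provides the correct structural input, one must verify that the barrier argument is stable under the regularization against the $(|\sigma_F|^{2}_{h_F}+s)^{-1}$ singularity, which becomes sharper as $s\to 0$; this is precisely where the Klt condition $b_j<1$ is needed, to leave enough room for the $\ep\log|\sigma_D|^{2}_{h_D}$ correction.
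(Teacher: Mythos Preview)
Your overall strategy matches the paper exactly: pass to a log resolution, solve the regularized equations with $e^{\varphi_s}$ on the right, obtain a $C^0$ estimate by the maximum-principle method of \cite{DFS} rather than pluripotential theory, feed this into the second-order estimate of Lemma \ref{2ndem}, and let $s\to 0$.

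The discrepancy is in the $C^0$ step. You claim a \emph{uniform} two-sided bound via the barriers $\varphi_s\pm\ep\log|\sigma_D|^{2}_{h_D}$, with $\ep$ chosen smaller than the discrepancies $1-b_j$. That is not what the estimate of \cite{DFS} actually yields, and the paper is explicit about this: the bound quoted as Lemma \ref{prop:c0} is
\[
C\;\geq\;\varphi_s\;\geq\;-2n\log\big(-\log|\sigma_D|^{2}_{h_D}\big)-C,
\]
so the lower bound tends to $-\infty$ (very slowly) along $\mathrm{Supp}(D)$, and the Remark following Proposition \ref{b} states precisely that this does \emph{not} give $\varphi\in L^\infty_{loc}(X_{reg})$. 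Your single-log barrier does not close: at the minimum of $\varphi_s-\ep\log|\sigma_D|^{2}_{h_D}$ one obtains $e^{\varphi_s}\geq c(|\sigma_F|^{2}+s)(|\sigma_E|^{2}+s)^{-1}$, and the resulting lower bound on $\varphi_s-\ep\log|\sigma_D|^{2}_{h_D}$ contains terms $(b_j-\ep e_j)\log|\sigma_{F_j}|^{2}$, where $e_j$ are the coefficients of $F_j$ in the Kodaira divisor $D$. These coefficients are fixed by the geometry of the resolution (only their overall scale is free in Lemma \ref{lem:kodaira}), so the condition ``$\ep<1-b_j$'' is not the relevant one and there is no reason the inequality $\ep e_j\geq b_j$ can be arranged while keeping $\pi^*\omega+\ep\,\ddbar\log h_D>0$. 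This is why \cite{DFS} uses the iterated-logarithm barrier instead.

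This does not break your argument, only your bookkeeping. The double-log lower bound is still strong enough for Lemma \ref{2ndem}: in the test function $H=\log\mathrm{tr}_\theta(\omega')-B\varphi_s+B\log|\sigma_D|^{2}_{h_D}$ the term $-B\varphi_s\leq 2Bn\log(-\log|\sigma_D|^{2}_{h_D})+C$ is dominated by $B\log|\sigma_D|^{2}_{h_D}\to-\infty$ near $D$, so the maximum is still interior and the estimate goes through. Likewise, $e^{\varphi_s}$ need not be bounded below for the $C^2$ and higher estimates on compact subsets of $Y\setminus\mathrm{Supp}(D)$. So replace your claimed uniform $C^0$ bound by the actual \cite{DFS} estimate, drop the sentence asserting that $e^{\varphi_s}$ is uniformly bounded below, and your proof coincides with the paper's.
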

\begin{proof}
We perturb the equation (\ref{a}) as we perturb equation (\ref{scmp11}),
$$(\omega+ s\theta + \ddbar \varphi_s)^n =  (|\sigma_E|^2_{h_E}+ s)  (|\sigma_F|^2_{h_F}+ s)^{-1}\Omega_Y.$$
The following  $C^0$ estimate from \cite{DFS} is the key of our simplified proof of Theorem \ref{EGZ}. 
\begin{lemma}\cite[Prop 2.1]{DFS}\label{prop:c0} 
 There exists an effective divisor $D$ with $\mathrm{Supp}(D) =  \mathrm{Supp}(F)\cup \mathrm{Supp}(E)$ and constant $C>0$ such that for all $$C \geq \varphi_s \geq -2n\log \Big(-\log|\sigma_D|_{h_D}^2\Big) - C.$$ Here $\sigma_D$ is a defining section of $D$ and $h_D$ is any hermitian metric on the line bundle corresponding to $D$.
 \end{lemma}
 
With this $C^0$ estimate for $\varphi_s$ and also Lemma \ref{2ndem} in hand, we can use the same argument as in proof of Theorem \ref{main1} to complete proof. \end{proof}

\begin{remark}
The result we obtained in the new proof is weaker than the result in \cite{EGZ} in the sense that our estimate doesn't imply $\varphi\in L^\infty_{loc}(X_{reg})$.
\end{remark}

At last, we show that there are singular solution to the M\"onge-Amp\`ere equation on a strongly pseudoconvex domain $\Omega$ contained in $\mathbb C^n$.

\begin{proof}[Proof of Corollary \ref{Singularsolution}]
Denote the  blowing up of the origin by $\pi: Bl_{0}\mathbb C^n\to \mathbb C^n$ and pull back the equation (\ref{Singular}) to  $\pi^{-1}\Omega$. Fix a hermitian metric $h_E$ on the line bundle associated to the exceptional divisor $E$, then  $$|\log(\sum_{i=1}^n|z_i|^2)-\log|E|^2_{h_E}|=\mathcal O(1)$$ To construct a solution with desired property stated in the corollary, we only need to construct a solution $\varphi$ to equation (\ref{Singular}) on $\pi^{-1}\Omega$ with $|\varphi-s\log|E|^2_{h_E}|=\mathcal O(1)$. Denote the defining function of domain $\Omega$ by $\rho$, i.e., $\Omega=\{\rho<0\}$ and $\ddbar\rho>0$. By choosing a large constant $A$, we have by Lemma \ref{lem:kodaira} that
$$\omega:=\ddbar(A\pi^*\rho)+s\ddbar\log h_E>0.$$ Now let $\phi=\varphi-s\log|E|^2_{h_E}-A\rho$ and rewrite the equation (\ref{Singular}) as 
 \begin{equation}\label{S}
\left\{
\begin{array}{rcl}
&&( \omega+ \ddbar \phi)^n = e^{f+\lambda(\phi+A\rho+s\log|E|^2_{h_E})}\pi^*(V\wedge\bar V),\\
&&\phi|_{\partial\Omega}=\psi-s\log|E|^2_{h_E}.
\end{array}\right.
\end{equation}
The above equation holds on $\pi^{-1}(\Omega)\setminus E$.  Now we may perturb the equation (\ref{S}) as we did in equation (\ref{scmp}). More precisely, for $t>0$ small, consider the following family of equations: \begin{equation}
\left\{
\begin{array}{rcl}
&&( \omega+ \ddbar \phi_t)^n =e^F (|E|^{2(\lambda s+n-1)}_{h_E}+t)dW,\\
&&\phi_t|_{\partial\Omega}=\psi-s\log|E|^2_{h_E},
\end{array}\right.
\end{equation}
where $F:=f+\lambda(\phi_t+A\rho)$ and $dW$ is a strictly positive volume form on $\pi^{-1}(\Omega)$ . Note that in the above equation, $\omega$ is  K\"ahler, $(\psi-s\log|E|^2_{h_E})|_{\partial\Omega}$ is smooth and that the term $|E|^{2(\lambda s+n-1)}_{h_E}$ is $L^p,p>1$ integrable when $s>0$. 
Now by the same argument as in the proof of Theorem \ref{main1}, we conclude that equation (\ref{S}) admits a  bounded solution $|\phi|<C$ on $\pi^{-1}(\Omega)\setminus E$ as the limit of $\phi_t$, which completes the proof.
\end{proof}

\begin{remark}
The above corollary can be easily generalized to the case when the solution $\varphi=\sum_{j=1}^{m}s_j\log(\sum_{i=1}^n|z^j_i|^2)+\mathcal O(1)$, where $s_j>0$ and $z^j_i,1\leq j\leq m, 1\leq i\leq n$ are local holomorphic coordinates at finite points $p_j\in\Omega$. 
\end{remark}

\begin{remark}
When preparing this paper, the author learned that similar ideas have been used to study Plurigreen function with prescribed singularity in \cite{PS1}. Moreover, combining \cite[Lemma 1]{PS1} and our argument here,  Corollary \ref{Singular} can also be generalized to the case that solution $\varphi=s\log(\sum^n_{i=1}|f_i|^2)+\mathcal O(1)$, where $f_i$ are holomorphic functions in a neighbourhood of origin $\{o\}$ with $\{o\}$ as their only common zero. We refer the reader to the very interesting result of \cite{PS1} concerning Plurigreen functions.
\end{remark}

\bigskip

% kolodziej reference update     Tosatti


\begin{thebibliography}{99}

%\bibitem{AK} Abramovich, D. and Karu, K. {\em Weak semistable reduction in characteristic 0},  Invent. Math. 139 (2000), no. 2, 241--273
%\bibitem{Ale} Alexeev, V. {\em Classification of log canonical surface singularities: an arithmetic proof}, Flips and Abundance for Algebraic Threefolds Asterisque Volume 211 (Salt Lake City, 1991)

 	 


%\bibitem{An1} Anderson, M. {\em Ricci curvature bounds and Einstein metrics on compact manifolds},
%J. Amer. Math. Soc. Vol. 2, No. 3(1989) 455--490 

%\bibitem{An2} Anderson, M. {\em The $L^2$ structure of moduli spaces of Einstein metrics on 4-manifolds},
%GAFA, Geom. Funct. Anal., Vol. 2, %No. 1, (1992) 29--89 

\bibitem{A} Aubin, T.  {\em \'Equations du type Monge-Amp\`ere sur les vari\'et\'es k\"ahl\'eriennes compactes},  Bull. Sci. Math. (2) 102 (1978), no. 1, 63--95

\bibitem{BT1} Bedford E. {\em The operator $(dd^c)^n$ on complex spaces}, In: Lelong P., Skoda H. (eds) Séminaire Pierre Lelong-Henri Skoda (Analyse) Années 1980/81. Lecture Notes in Mathematics, vol 919. Springer, Berlin, Heidelberg.
\bibitem{BT} Bedford,E and Taylor, B.A {\em new capacity for plurisubharmonic functions}, Acta Math. 149 (1982), 1-41 
%\bibitem{BS}  Borbon de M. and Spotti, C. {\em Local models for conical K\"ahler-Einstein metrics}, arXiv:1804.06815.

%\bibitem{B} Berman, R. and Guenancia, H.  {\em K\"ahler-Einstein metrics on stable varieties and log canonical pairs}, preprint arXiv:1304.2087 
\bibitem{Blo} Blocki, Z. {\em A gradient estimate in the Calabi-Yau theorem}, Math. Ann. 344 (2009) 317--327
\bibitem{BK} Blocki, Z. and  Kolodziej, S. {\em On regularization of plurisubharmonic functions on manifolds}, Proc. Amer. Math. Soc 135 (2007), 2089–2093
%\bibitem{CC1}  Cheeger, J. and Colding, T.H. {\em On the structure of
%space with Ricci curvature bounded below I}, J. Differential. Geom.
%{\bf 46} (1997), 406-480

%\bibitem{CC2}  Cheeger, J. and Colding, T.H.  {\em On the structure of
%space with Ricci curvature bounded below II}, J. Differential. Geom.
%{\bf 52} (1999), 13--35

 % \bibitem{CCT} Cheeger, J., %Colding, T.H. and Tian, G.  {\em On the singularities
%of spaces with bounded Ricci curvature}, Geom.Funct.Anal. Vol.12
%(2002), 873--914

%\bibitem{CG}  Cheeger, J.  and  Gromov, M. {\em Chopping Riemannian manifolds, Differential Geometry}, B. Lawson and K. Tenenblatt Eds., Pitman Press, (1990) 85--94

%\bibitem{CT} Cheeger, J. and Tian, G. {\em Curvature and injectivity radius estimates for Einstein 4-manifolds},  J. Amer. Math. Soc. 19 (2006), no. 2, 487--525


%\bibitem{CDS1} Chen, X.X., Donaldson, S.K. and Sun, S. {\em K\"ahler-Einstein metrics on Fano manifolds. I: Approximation of metrics with cone singularities},  J. Amer. Math. Soc. 28 (2015), no. 1, 183--197

%\bibitem{CDS2} Chen, X.X., Donaldson, S.K. and Sun, S. {\em K\"ahler-Einstein metrics on Fano manifolds. II: Limits with cone angle less than $2\pi$, } J. Amer. Math. Soc. 28 (2015), no. 1, 199--234

%\bibitem{CDS3} Chen, X.X., Donaldson, S.K. and Sun, S. {\em K\"ahler-Einstein metrics on Fano manifolds, III: limits as cone angle approaches $2\pi$ and completion of the main proof}, J. Amer. Math. Soc. 28 (2015), no. 1, 235--278
\bibitem{CKNS} Caffarelli, L., Kohn, J., Nirenberg, L. and Spruck, J. {\em The Dirichlet problem for nonlinear second-order elliptic equation II Complex
Monge-Amp\`ere, and uniform elliptic equation.} Comm. Pure Appl.Math (1985) 209--252
\bibitem{CXX} Chen, X.X. {\em
The space of Kähler metrics},
J. Differential Geom. 56 (2000), no. 2, 189–234
%\bibitem{CY} Cheng, S.Y. and Yau, S.T.  {\em  Differential equations on Riemannian manifolds and their geometric
%applications}, Comm. Pure Appl. Math. 28 (1975), 333--354

%\bibitem{CY2} Cheng, S.Y and Yau, S.T. {\em On the existence of a complete K\"ahler metric on noncompact complex manifolds and the regularity of Fefferman's equation}
%Comm. Pure Appl. Math 33 (1980) 507--544

%\bibitem{CN} Colding, T.H. and Naber, A. {\em Sharp H\"older continuity of tangent cones for spaces with a lower Ricci curvature bound and applications},  Ann. of Math. (2) 176 (2012), no. 2, 1173--1229


 
% \bibitem{DM}  Deligne P. and  Mumford, D. {\em  The irreducibility of the space of curves of given genus},  Inst.
%Hautes Etudes Sci. Publ. Math., (36): 75--109, 1969
\bibitem{DFS} Datar,V., Fu, X., and  Song, J. {\em K\"ahler-Einstein metric near an isolated log-canonical singularity}, preprint, arXiv:2106.05486

\bibitem{DGG} Di Nezza, E., Guedj, V. and  Guenancia, H. {\em Families of singular K\"ahler-Einstein metrics}, 	arXiv:2003.0817
\bibitem{D-P} Demailly, J.P. and Pali, N. {\em Degenerate complex Monge-Ampère equations over compact Kähler manifolds}, Internat. J. Math. 21 (2010), no. 3, 357–405

%\bibitem{DS1} Donaldson, S.K. and Sun, S. {\em Gromov-Hausdorff limits of K\"ahler manifolds and algebraic geometry},  Acta Math. 213 (2014), no. 1, 63--106 
\bibitem{DS2}  Donaldson, S. and Sun, S. {\em Gromov-Hausdorff limits of K\"ahler manifolds and algebraic geometry, II},  J. Differential Geom. 107 (2017), no. 2, 327--371
%\bibitem{DS2} Donaldson, S.K. and Sun, S. {\em Gromov-Hausdorff limits of K\"ahler manifolds and algebraic geometry, II},  arXiv:1507.05082

 \bibitem{EGZ} Eyssidieux, P.,   Guedj, V. and Zeriahi, A. {\em Singular
K\"{a}hler-Einstein metrics}, J. Amer. Math. Soc. 22 (2009),
607--639
%\bibitem{Fang-Fu} Fang, H.L and Fu, X {\em On the construction of a complete Kahler-Einstein metric with negative scalar curvature near an isolated log canonical singularity}, arXiv:1810.05194
%\bibitem{FGS} Fu, X., Guo, B. and Song, J. {\em Geometric estimates for complex Monge-Amp\`ere equations}, preprint
\bibitem{FGS} Fu, X., Guo, B. and Song, J. {\em Geometric estimates for complex Monge-Amp\`ere equations},   J. Reine Angew. Math. 765 (2020), 69--99
%\bibitem{GW}  Gross, W and  Wilson, P. M. H. {\em Large complex structure limits of $K3$ surfaces}, J. Diff. Geom., Vol. 55. No. 3 (2000) 475--546
\bibitem{G-Z} Guedj, V. and Zeriahi, A. {\em Intrinsic capacities on compact Kähler manifolds}, J. Geom. Anal. 15 (2005), no. 4, 607–639



\bibitem{Guan} Guan, B. {\em The Dirichlet problem for the complex Monge–Amp\`ere equation and regularity of the pluricomplex Green’s function,} Comm. Anal.
Geom. (4) (1998), 687–703
\bibitem {H-S} Hein, H-J. and Sun, S. {Calabi-Yau manifolds with isolated conical singularities}, Publ. Math. Inst. Hautes Études Sci. 126 (2017), 73-130


 %\bibitem{HX} Hacon, C.  and Xu, C. {\em Existence of log canonical closures},  Invent. Math. 192 (2013), no. 1, 161--195
  
  
% \bibitem{Kaw} Kawamata, Y. {\em Crepant Blowing-Up of 3-dimensional Canonical Singularity and its Application to Degeneration of Surfaces}, Annals of Mathematics. (1988) Volume 127, 93--163.
 %\bibitem{KKMS}  Kempf, G.,  Knudsen, F.,  Mumford, D. and Saint-Donat, B. {\em Toroidal Embeddings I}, LNM 339, Springer-Verlag, Berlin, 1973 
  
%\bibitem{K1}  Kollar, J. {\em Moduli of varieties of general},  Handbook of Moduli: Volume II, Adv. Lect. Math. (ALM), vol. 24, Int. Press, Somerville, MA, 2013, pp. 115--130
%\bibitem{Kob} Kobayashi, R. {\em Einstein-K\"ahler metrics on open algebraic surfaces of general type} Tohoku Math. Journ 37 {1985} 43--77
%\bibitem{Kob2} Kobayashi, R {\em Uniformization of complex surfaces} in K\"ahler metric and moduli spaces, Adv.Stud.Pure Math. 18-II (1990), 313--394.
%\bibitem{Kodaira} Kodaira, K.{\em On compact analytic surfaces II}, Annals of Math. Volume 77 (1963), 563–626
\bibitem{KM} Kollar, J, and Mori, S. {\em Birational geometry of algebraic varieties},  Cambridge Tracts in Mathematics, 134.  Cambridge University Press, Cambridge, 1998.
\bibitem{Kol1} Ko{\l}odziej, S. {\em  Complex Monge-Amp\`ere
equation}, Acta Math.  180 (1998), no. 1, 69--17
\bibitem{Kol2} Ko{\l}odziej, S. {\em The complex Monge-Amp\'ere and pluripotential theory}, Memoirs of the American Mathematical Society. (2005), Volume 178
%\bibitem{Kov} Kovacs, S. {\em Singularities of stable varieties}, Handbook of moduli. Vol. II, 159--203, Adv. Lect. Math. (ALM), 25, Int. Press, Somerville, MA, 2013
  
%\bibitem{LL}  Leung N. and Lu, P. {\em Degeneration of K\"ahler-Einstein metrics on complete K\"ahler
%manifolds}, Comm. Anal. Geom. 7 (1999) 431--449
%\bibitem {L-X1} Li, C. and Xu, C.Y {\em Stability of valuations and Kollar components}, arxiv:1604.05398.

%\bibitem {L-X2} Li, C. and Xu, C.Y {\em Stability of Valuations: Higher rational Rank}, Peking Mathematical Journal. (2018), Vol. 1, p 1--79

%\bibitem {L-W-X} Li, C., Wang, X.W and Xu, C.Y {\em Algebraicity of metric tangent cones and equivariant K-stability} arXiv:1805.03393





%\bibitem{Li} Li, P. {\em Lecture Notes on Geometric Analysis},  Lecture Notes Series No. 6, Research Institute of Mathematics, Global Analysis Research Center, Seoul National University, Korea (1993)

%\bibitem{N} Nakajima, H. {\em Hausdorff convergence of Einstein $4$-manifolds}, J. Fac. Sci. Univ. Tokyo 35 (1988) 411--424. MR0945886 (90e:53063)
%\bibitem{Nak} Nakamura, S. Master Thesis, Saitama University, 1989.
%\bibitem{PSS} Phong, D.H., Song, J. and Sturm, J. {\em Degeneration of %K\"ahler-Ricci solitons on Fano manifolds},  Univ. Iagel. Acta Math. No. %52 (2015), 29--43

\bibitem{PS3} Phong, D.H.,  Sturm, J. {\em The Dirichlet problem for degenerate complex Monge-Ampere equations} Comm. Anal. Geom. 18 (2010), 145–170
\bibitem {PS} Phong, D.H., Sturm, J. {\em On pointwise gradient estimates for the complex mMonge-Amp\`ere equation} Advance in geometric analysis, Adv.Lect.Math(ALM), 21 87--95
\bibitem{PS1} Phong, D.H., Sturm, J. {\em On the singularities of the pluricomplex Green's function.} Advances in analysis: the legacy of Elias M. Stein, 419–435, Princeton Math. Ser., 50, Princeton Univ. Press, Princeton, NJ, 2014
%\bibitem{RZ1} Rong, X and Zhang, Y. {\em Continuity of extremal transitions and flops for Calabi-Yau manifolds, Appendix B by Mark Gross},  J. Differential Geom. 89 (2011), no. 2, 233--269

%\bibitem{RZ2} Rong, X and Zhang, Y. {\em Degenerations of Ricci-flat Calabi-Yau manifolds}, Commun. Contemp. Math. 15 (2013), no. 4, 1250057, 8 pp
  
%\bibitem{Ru1} Ruan, W. {\em  Degeneration of K\"ahler-Einstein manifolds,  I. The normal crossing case}, Commun. Contemp. Math. 6 (2004), no. 2, 301--313
 
%\bibitem{Ru2} Ruan, W. {\em Degeneration of K\"ahler-Einstein manifolds, II.  The toroidal case},  Comm. Anal. Geom. 14 (2006), no. 1, 1--24
 
 



%\bibitem{S1} Song, J. {\em  Ricci flow and birational surgery},   arXiv:1304.2607

%\bibitem{S2} Song, J.  {\em Riemannian geometry of K\"ahler-Einstein currents}, arXiv:1404.0445

%\bibitem{S3} Song, J. {\em Riemannian geometry of K\"ahler-Einstein currents II: an analytic proof of Kawamata¡¯s base point free theorem}, arXiv:1409.8374

%\bibitem {S4} Song, J. {\em Degeneration of k\"ahler-Einstein manifolds of negative scalar curvature}, arXiv:1706.01518v1

%\bibitem{ST1} Song, J. and Tian, G. {\em The K\"ahler-Ricci flow on surfaces of positive Kodaira dimension}, Invent. Math. {\bf 170} (2007), no. 3, 609--653
\bibitem{PSS1} Phong, D.H., Song, J. and Sturm, J. {\em Complex Monge-Amp\`ere equations}, Surveys in differential geometry. Vol. XVII, Surv. Differ. Geom., 17, Int. Press, Boston, MA, 2012, 32-410
 \bibitem{ST2} Song, J. and Tian, G. {\em Canonical measures and K\"ahler-Ricci flow}, J . Amer. Math. Soc. 25 (2012), 303--353

 %\bibitem{ST3} Song, J. and Tian, G. {\em The K\"ahler-Ricci flow through singularities},  Invent. Math. 207 (2017), no. 2, 519--595
    
%\bibitem{SW0}    Song, J. and Weinkove, B. {\em The K\"ahler-Ricci flow on Hirzebruch surfaces},   J. Reine Angew. Math. 659 (2011), 141--168
    
%\bibitem{SW1}   Song, J. and Weinkove, B. {\em Contracting exceptional divisors by the K\"ahler-Ricci flow}, Duke Math. J. 162 (2013), no. 2, 367--415
    
%\bibitem{SW2}     Song, J. and Weinkove, B. {\em Contracting exceptional divisors by the K\"ahler-Ricci flow II}, Proc. Lond. Math. Soc. (3) 108 (2014), no. 6, 1529--1561

%\bibitem{SY} Song, J. and Yuan, Y. {\em Metric flips with Calabi ansatz}, Geom. Func. Anal. 22 (2012), no. 1, 240--265
    
 %\bibitem{T0} Tian, G. {\em     Degeneration of K\"ahler-Einstein manifolds I}, Proceedings of Symposia in
%Pure Mathematics, Vol. 54, Part 2, 595--609

 %\bibitem{T1} Tian, G. {\em On Calabi's conjecture for complex surfaces with positive first Chern class},  Invent. Math. 101 (1990), no. 1, 101--172


% \bibitem{T3} Tian, G. {\em Partial $C^0$-estimate for K\"ahler-Einstein metrics}, Commun. Math. Stat. 1 (2013), no. 2, 105--113
 
%\bibitem{T4} Tian, G. {\em K-stability and K\"ahler-Einstein metrics},  Comm. Pure Appl. Math. 68 (2015), no. 7, 1085--1156
 
%\bibitem{TY} Tian, G, Yau, S.T {\em Existence of K\"ahler-Einstein metrics on complete K\"ahler manifolds and their application to algebraic geometry},
%Mathematical aspects of string theory (San Diego, Calif, 1986), 574--628, Adv. Ser. Math. Phys, 1 World Sci. Publishing, Singapore, 1987
%\bibitem{Ulf} Ulf, P. {On degenerations of algebraic surfaces}, Mem. Amer. Math. Soc. 189 (1977), 1-144.  

\bibitem {SSW} Song, J., Sturm, J. and Wang, X.W. {\em  Continuity of the Weil-Petersson potential},   	arXiv:2008.11215
  
\bibitem{Y1}  Yau, S.T. {\em On the Ricci curvature of a compact  K\"{a}hler
manifold and complex Monge-Amp\`{e}re  equation I}, \rm  Comm. Pure Appl.  Math.  31  (1978),  339--411


%\bibitem{Y2} Yau, S.T. {\em A general Schwarz lemma for K\"{a}hler manifolds},
% Amer. J. Math.  100 (1978), 197--204
%\bibitem{Ye} Yeung, S.K {\em Properties of complete non-compact K\"ahler surfaces of negative ricci curvature}, J.Reine angew. Math. 464 (1995) 129--141.
\bibitem{Z} Zhang, Z. {\em On degenerate Monge-Amp\`ere equations over closed K\"ahler manifolds},  Int. Math. Res. Not. 2006, Art. ID 63640, 18 pp


 \end{thebibliography}
\end{document}